\theoremstyle{plain}
\newtheorem{thm}{Theorem}[section]
\newtheorem{prop}[thm]{Proposition}
\newtheorem{lem}[thm]{Lemma}
\newtheorem{rem}[thm]{Remark}
\newtheorem{defn}[thm]{Definition}
\theoremstyle{definition}
\newtheorem{exam}[thm]{Example}
\renewcommand{\P}{\mathbb{P}}
\newcommand{\Z}{\mathbb{Z}}
\newcommand{\N}{\mathbb{N}}
\newcommand{\R}{\mathbb{R}}
\newcommand{\T}{\mathcal{T}}
\newcommand{\wiredT}[1][h]{\widetilde{\mathcal{T}}^{#1}}
\newcommand{\up}{\shneg}
\newcommand{\down}{\shpos}
\newcommand{\G}{\mathsf{G}}
\renewcommand{\H}{\mathsf{H}}
\renewcommand{\d}{\mathrm{d}}
\newcommand{\abs}[1]{\lvert#1\rvert}
\newcommand{\indicator}[1]{\mathds{1}_{#1}}
\newcommand{\lcm}{\operatorname{lcm}}
\newcommand{\rec}{\mathsf{Rec}}
\newcommand{\rr}{\mathsf{RR}}
\newcommand{\sand}{\mathsf{SP}}
\renewcommand{\r}{\mathsf{r}}
\newenvironment{eqsystem}%
{\left[
\begin{aligned}}%
{\end{aligned}
\right.}
\begin{document}
\title{The rotor-router group of directed covers of graphs}
\author{Wilfried Huss\footnote{Vienna University of Technology, Austria.}, 
Ecaterina Sava\footnote{Graz University of Technology, Austria. Research supported by the FWF program W 1230-N13.}}
\maketitle

\begin{abstract}
A rotor-router walk is a deterministic version of a random walk, in which the walker 
is routed to each of the neighbouring vertices in some fixed cyclic order.
We consider here directed covers of graphs (called also periodic trees)
and we study several quantities related to rotor-router walks on directed covers.
The quantities under consideration are: order of the rotor-router group,
order of the root element in the rotor-router group and the connection with random walks. 
\end{abstract}

\textbf{Keywords:} finite graphs, directed covers, periodic trees, rotor-router walks,
rotor-router group, sandpile group.

\textbf{Mathematics Subject Classification:} 05C05; 05C25; 82C20.

%=============================================================================
\section{Introduction}

Given a finite connected and directed graph $\G$, one can construct
a labelled rooted tree $\T$ in the following way. The root vertex is labelled
with some $i\in\G$. Recursively if $x$ is a vertex in $\T$
with label $i\in\G$, then $x$ has $d_{ij}$ successors with label $j$. The tree $\T$ is called the 
\emph{directed cover} of $\G$. Random walks on directed covers of graphs
have been studied by \textsc{Takacs} \cite{Takacs97randomwalk}, 
\textsc{Nagnibeda and Woess} \cite{woess_nagnibeda}. 
On infinite graphs, their methods have been extended by \textsc{Gilch and Müller} \cite{gilch_mueller_covers}.

\emph{Rotor-router walks} are deterministic analogues to random walks, which
have been first introduced into the physics literature under the 
name \emph{Eulerian walks} by \textsc{Priezzhev, D.Dhar et al} \cite{PhysRevLett.77.5079}
as a model of \emph{self organized criticality}. In a rotor-router
walk on a graph, equip each vertex with an arrow (the rotor) pointing to one of the 
neighbours of the vertex.  A particle performing a rotor-router walk
carries out the following procedure at each step. First  it changes the 
rotor at its current position to point to the next neighbour, in a fixed order 
chosen at the beginning, and then moves to the neighbour the rotor is now pointing at. 
These deterministic walks have gained increased interest in the last years, and in many settings there is 
remarkable agreement between the behaviour of rotor-router walks and the expected behaviour of random walks. 
\textsc{Holroyd and Propp} \cite{holroyd_propp} proved that many quantities
associated to rotor-router walks such as normalized hitting frequencies, hitting
times and occupation frequencies, are concentrated around their expected values
for random walks. See also {\sc Cooper and Spencer} \cite{cooper_spencer_2006},
{\sc Doerr and Friedrich} \cite{doerr_friedrich_2007}, 
{\sc Angel and Holroyd} \cite{angel_holroyd_2011}, {\sc Kleber} \cite{kleber_goldbug},
and also \textsc{Cooper, Doerr et al.} \cite{cooper_doerr_friedrich_spencer_2006}. On the other hand,
rotor-router walks and random walks can also have striking differences. 
In questions concerning recurrence and transience of rotor-router walks on 
homogeneous trees, this has been proven by {\sc Landau and Levine} \cite{landau_levine_2009}.
For random initial configurations on homogeneous trees, see {\sc Angel and Holroyd}
\cite{angel_holroyd_2011}. In our note \cite{huss_sava_transience_dircovers} we have extended
their result to rotor-router walks on directed covers of finite graphs. 
Furthermore, one can use rotor-router walks for solving questions regarding the 
behaviour of random walks: for instance, in \cite{huss_sava_rotor} we have used a special rotor-router
process in order to determine the harmonic measure, that is, the exit distribution
of a random walk from a finite subset of a graph.

In this work, we continue the study of several quantities related to 
rotor-router walks such as the order of the rotor-router group and the order of the
root element in the rotor group on directed covers of finite graphs. On homogeneous trees, this was
done by \textsc{Levine}\cite{levine_sandpile_tree}.  
The remainder of the paper is structured as follows. In Section \ref{sec:preliminaries} we briefly 
review the definitions and basic properties of: graphs and trees, directed covers of graphs, rotor-router
walks, rotor-router and sandpile groups and the connection between them. We will
follow the notation from \cite{huss_sava_transience_dircovers}.

Section \ref{sec:order_rr_group} is dedicated to the study of the rotor-router group 
on finite pieces of directed covers $\T$ of finite graphs $\G$. In particular we study
the rotor-router group on balls with respect to the graph metric, that is on $\T^h = \{x\in\T: \d(r,x) \leq h\}$,
where $r$ is the root vertex of the graph and $\d(r,x)$ is the length of the shortest path from $r$ to $x$.
To the graph $\T^h$ we add a global sink vertex, which is connected to the root and all leaves.
By counting a certain family of rooted spanning forests, we give a recurrence formula in Theorem \ref{thm:recurent_rotor_group}
for the order of the rotor-router group on $\T^h$ in terms of the respective orders on the principal subbranches of $\T^h$.
Furthermore, in Theorem \ref{thm:asymptotics_rrorder} we show that the order grows doubly exponential in $h$.
The growth depends on the spectral radius of the adjacency matrix of $\G$.
Then, we consider the order of the root element in the rotor-router group, which can be defined as the number of particles
needed at the origin of $\T^h$, such that after performing a rotor-router walk and stopping the particles when they
hit the sink, we are back to the same rotor-router configuration we started from.
We describe in Theorem \ref{thm:root_element_order} a recursive way for finding
the order of the root element in terms of the respective orders of the subbranches.
A key tool in the proof of this result is the so-called \emph{explosion formula} introduced
by {\sc Angel and Holroyd} \cite{angel_holroyd_2011}. 

\section{Preliminaries}\label{sec:preliminaries}

\paragraph{Graphs and Trees.}
Let $\G=(V,E)$ be a locally finite and connected directed multigraph, with vertex set $V$
and edge set $E$. For sake of simplicity, we identify the graph $\G$ with its vertex 
set $V$, i.e., $i\in \G$ means $i\in V$. It will be clear from the context whether we are 
speaking about vertices or edges. If $(i,j)$ is an edge of $\G$, we write $i\sim_{\G}j$.
We write $\d(i,j)$ for the {\em graph distance}, that is, the length of the shortest path between
$i$ and $j$. A directed graph $\G$ is {\em strongly connected} if for every two vertices $i$ and $j$
there exists a directed  path from $i$ to $j$ and a directed path from $j$ to $i$.

Let $D=(d_{ij})_{i,j\in\G}$ be the \emph{adjacency matrix} of $\G$, where $d_{ij}$ is
the number of directed edges connecting $i$ to $j$.
We write $d_i$ for the sum of the entries in the $i$-th row of $D$, that is
$d_{i}=\sum_{j\in\G} d_{ij}$ is the {\em degree} of the vertex $i$ (or the number of the outgoing edges from $i$).
The adjacency matrix $D$ is {\em irreducible} if for every
pair of indices $i$ and $j$ there exists a natural number $n$ such that $d^{(n)}_{ij}>0$,
where $d^{(n)}_{ij}$ represents the $(i,j)$-entry of the matrix power $D^n$.
If $\G$ is strongly connected, its adjacency matrix is irreducible.

A {\em tree} $\T$ is a connected, cycle-free graph. A {\em rooted tree} is a tree with a distinguished vertex
$r$, called {\em the root}. For a vertex $x\in\T$, denote by $|x|$ the {\em height} of $x$, that is the graph distance from the root to $x$.
For any positive integer $h$, define the {\em truncated tree} $\T^h = \{x\in\T: |x| \leq h\}$ to be the subgraph of $\T$ induced by the
vertices at height smaller or equal to $h$. 

For a vertex $x\in \T\setminus\{r\}$, denote by $x^{(0)}$ its {\em ancestor}, that is
the unique neighbour of $x$ closer to the root $r$.
It will be convenient to attach an additional vertex $r^{(0)}$ to the root $r$, which will
be considered in the following as a sink vertex. Additionally we fix a planar embedding of $\T$ and enumerate
the neighbours of a vertex $x\in \T$ in counter-clockwise order $\big(x^{(0)}, x^{(1)}, \ldots, x^{(d_x-1)}\big)$
beginning with the ancestor.
We will call a vertex $y$ a \emph{descendant} of $x$, if $x$ lies on the unique shortest path from $y$
to the root $r$. A descendant of $x$, which is also a neighbour of $x$, will be called a \emph{child}.
A \emph{cone} $C_x$ rooted a $x$ is the subtree spanned by the descendants of $x$.
The {\em principal branches} of $\T$ are the cones rooted at the children of the root $r$.

The \emph{wired tree} $\wiredT$ of height $h$ is the multigraph obtained from $\T^h$ by
collapsing all leaves, i.e. all vertices $y\in \T$ with $\d(r,y) = h$, together with the ancestor $r^{(0)}$
of the root  to a single vertex $s$, the sink. We do not collapse multiple edges.

%=============================================================================
\paragraph{Directed Covers of Graphs.}

Let now $\G$ be a finite, directed and strongly connected multigraph with  adjacency matrix $D=(d_{ij})$. 
Let $m$ be the cardinality of the vertices of $\G$, and label the
vertices of $\G$ by $\{1,2,\ldots, m \}$.

The {\em directed cover} $\T$ of $\G$ is defined recursively as a rooted tree
$\T$ whose vertices are labelled by the vertex set $\{1,2,\ldots, m \}$ of $\G$.
The root $r$ of $\T$ is labelled with some $i\in\G$. Recursively, if $x$ is a vertex in $\T$ with
label $i\in \G$, then $x$ has $d_{ij}$ descendants with label $j$. We define
the {\em label function} $\tau:\T\to\G$ as the map that associates to each vertex in $\T$ its label 
in $\G$. The label $\tau(x)$ of a vertex $x$ will be also called the {\em type} of $x$. 
For a vertex $x\in\T$, we will not only need its type, but also the types of its children.
In order to keep track of the type of a vertex and the types of its children we
introduce the {\em generation function} $\chi = \left(\chi_i\right)_{i\in\G}$ with 
$\chi_i:\{1,\ldots,d_i\}\to\G$. For a vertex $x$ of type $i$,
$\chi_i(k)$ represents the type of the $k$-th child $x^{(k)}$ of $x$, i.e.,
\begin{equation*}
\text{if } \tau(x)=i \text{ then } \chi_i(k)=\tau(x^{(k)}), \text{ for } k=1,\ldots,d_i.
\end{equation*}
As the neighbours $\big(x^{(0)},\ldots, x^{(d_{\tau(x)})}\big)$ of any vertex $x$ are drawn in clockwise order,
the generation function $\chi$ also fixes the planar embedding of the tree and thus defines $\T$ uniquely
as a planted plane tree.

In order to distinguish between the two graphs
$\G$ and $\T$ we use the variables $i,j$ for vertices in $\G$ (and labels or types in $\T$) and $x,y$
for vertices of $\T$. The tree $\mathcal{T}$ constructed
in this way is called the \emph{directed cover} of $\G$. Such trees are also known as
\emph{periodic trees}, see {\sc Lyons} \cite{LP:book}, or \emph{trees with finitely many cone types} in 
{\sc Nagnibeda and Woess} \cite{woess_nagnibeda}. 
We say that the cone $C_x$ has {\em cone type} $\tau(x)$. Note that if $x,y\in\T$ have the
same label, that  is $\tau(x)=\tau(y)$, then the trees $C_x$ and $C_y$ are isomorphic as rooted trees.
Since $\G$ is a finite graph, the number of isomorphism classes of $C_x$, $x\in\T$ is finite. 

The graph $\G$ is called the \emph{base graph} or the \emph{generating graph} for the tree $\T$.
We write $\T_i$ for a tree with root $r$ of type $i$, that is $\tau(r)=i$.
In the following we give two basic examples of directed covers of graphs.

\begin{figure}
\centering
\begin{tikzpicture}[scale=0.65]
\begin{scope}[xshift=-3cm, yshift=1cm]
\node (chi) at (0,0) {
\begin{tabular}{lc|cc}
\multicolumn{2}{c}{} & \multicolumn{2}{l}{\scalebox{0.7}{$k\rightarrow$}} \\
\multicolumn{2}{c|}{$\chi_i(k)$} & 1 & 2 \\
\hline
\multirow{2}{*}{\scalebox{0.7}{\rotatebox{-90}{\rotatebox{90}{$i$} $\rightarrow$}}}
  & 1 & 2 & \\
  & 2 & 2 & 1
\end{tabular}
};
\end{scope}
\begin{scope}[xscale=0.8]
\coordinate (n0) at (4.65625,0);
\coordinate (n1) at (4.65625,1);
\coordinate (n3) at (1.625,2);
\coordinate (n22) at (1.625,3);
\coordinate (n24) at (0.5,4);
\coordinate (n30) at (0.5,5);
\coordinate (n32) at (0,6);
\coordinate (n31) at (1,6);
\coordinate (n23) at (2.75,4);
\coordinate (n26) at (2.0,5);
\coordinate (n29) at (2,6);
\coordinate (n25) at (3.5,5);
\coordinate (n28) at (3,6);
\coordinate (n27) at (4,6);
\coordinate (n2) at (7.6875,2);
\coordinate (n5) at (5.75,3);
\coordinate (n16) at (5.75,4);
\coordinate (n18) at (5.0,5);
\coordinate (n21) at (5,6);
\coordinate (n17) at (6.5,5);
\coordinate (n20) at (6,6);
\coordinate (n19) at (7,6);
\coordinate (n4) at (9.625,3);
\coordinate (n7) at (8.5,4);
\coordinate (n13) at (8.5,5);
\coordinate (n15) at (8,6);
\coordinate (n14) at (9,6);
\coordinate (n6) at (10.75,4);
\coordinate (n9) at (10.0,5);
\coordinate (n12) at (10,6);
\coordinate (n8) at (11.5,5);
\coordinate (n11) at (11,6);
\coordinate (n10) at (12,6);
\end{scope}

\draw (n1) -- (n0);
\draw (n3) -- (n1);
\draw (n22) -- (n3);
\draw (n24) -- (n22);
\draw (n30) -- (n24);
\draw (n32) -- (n30);
\draw (n31) -- (n30);
\draw (n23) -- (n22);
\draw (n26) -- (n23);
\draw (n29) -- (n26);
\draw (n25) -- (n23);
\draw (n28) -- (n25);
\draw (n27) -- (n25);
\draw (n2) -- (n1);
\draw (n5) -- (n2);
\draw (n16) -- (n5);
\draw (n18) -- (n16);
\draw (n21) -- (n18);
\draw (n17) -- (n16);
\draw (n20) -- (n17);
\draw (n19) -- (n17);
\draw (n4) -- (n2);
\draw (n7) -- (n4);
\draw (n13) -- (n7);
\draw (n15) -- (n13);
\draw (n14) -- (n13);
\draw (n6) -- (n4);
\draw (n9) -- (n6);
\draw (n12) -- (n9);
\draw (n8) -- (n6);
\draw (n11) -- (n8);
\draw (n10) -- (n8);

\fill[black] (n0) circle (2pt);
\fill[red] (n1) circle (2pt);
\fill[blue] (n3) circle (2pt);
\fill[red] (n22) circle (2pt);
\fill[blue] (n24) circle (2pt);
\fill[red] (n30) circle (2pt);
\fill[black] (n32) circle (2pt);
\fill[black] (n31) circle (2pt);
\fill[red] (n23) circle (2pt);
\fill[blue] (n26) circle (2pt);
\fill[black] (n29) circle (2pt);
\fill[red] (n25) circle (2pt);
\fill[black] (n28) circle (2pt);
\fill[black] (n27) circle (2pt);
\fill[red] (n2) circle (2pt);
\fill[blue] (n5) circle (2pt);
\fill[red] (n16) circle (2pt);
\fill[blue] (n18) circle (2pt);
\fill[black] (n21) circle (2pt);
\fill[red] (n17) circle (2pt);
\fill[black] (n20) circle (2pt);
\fill[black] (n19) circle (2pt);
\fill[red] (n4) circle (2pt);
\fill[blue] (n7) circle (2pt);
\fill[red] (n13) circle (2pt);
\fill[black] (n15) circle (2pt);
\fill[black] (n14) circle (2pt);
\fill[red] (n6) circle (2pt);
\fill[blue] (n9) circle (2pt);
\fill[black] (n12) circle (2pt);
\fill[red] (n8) circle (2pt);
\fill[black] (n11) circle (2pt);
\fill[black] (n10) circle (2pt);

\draw[dashed] (n10) -- (n32);
\node (sup) [right=0.125 of n10]  {$s^\up$};
\node (sdown) [right=0.125 of n0]  {$s_\down$};
\node (r) [right=0.125 of n1, yshift=-2]  {$r$};
\end{tikzpicture}
\caption{\label{fig:fib_tree}The wired Fibonacci tree $\wiredT[5]_2$ of height $5$ and root of type $2$.}
\end{figure}

\begin{exam}[{\bf Fibonacci tree}] The Fibonacci tree is the directed cover of the
graph $\G$ on two vertices $\{1,2\}$, with adjacency matrix 
%\begin{small}
\begin{equation*}
D=
\begin{pmatrix}
0 & 1 \\
1 & 1 
\end{pmatrix}
\end{equation*}
%\end{small}
It is a tree with two cone types: a vertex with label $1$ (of type $1$) in the tree has only one
child with label $2$ and a vertex of type $2$ has one child of type $1$ and 
one child of type $2$. In Figure \ref{fig:fib_tree} we have a wired Fibonacci tree with root type $2$
(the vertices of type $1$ are coloured in blue and those of type $2$ in red). 
The generation function $\chi$ is also given in the picture above.
\end{exam}

\begin{exam}[{\bf Bi-regular tree $\T$}] The bi-regular tree $\T$ with parameters $\alpha,\beta\in\N$
is the directed cover of the
graph $\G$ on two vertices  $\{1,2\}$, with adjacency matrix 
%\begin{small}
\begin{equation*}
D=
\begin{pmatrix}
0 & \alpha \\
\beta & 0 
\end{pmatrix}
\end{equation*}
%\end{small}
It is a tree with two cone types: every vertex in $\T$ with label $1$ has no child with label $1$
and $\alpha=d_{12}$ children with label $2$, and every vertex with label $2$ has $\beta=d_{21}$ children with label 
$1$ and no child with label $2$. Since in this case, on each level there are vertices of only one type,
the function $\chi$ has to be: $\chi_1(k)=2$, for $k=1,\ldots,\alpha$ and $\chi_2(k)=1$ for $k=1,\ldots,\beta$.
\end{exam}

\begin{figure}
\centering
\input{biregular_2_3.tex}
\caption{\label{fig:biregular_tree}The wired $(2,3)$-bi-regular tree $\wiredT[5]_1$ of height $5$ and root of type $1$.}
\end{figure}

%===============================================================================
\subsection{Rotor-Router Walks}
\label{subsec:rr_walks}

On a locally finite and connected graph $\G$, a \emph{rotor-router walk} is defined as follows. 
For each vertex $x\in\G$ fix a cyclic ordering $c(x)$ of its neighbours: $c(x)=\big(x^{(0)},x^{(1)},\ldots,x^{(d_x-1)}\big)$,
where  $x\sim_{\G} x^{(i)}$ for all $i=0,1,\ldots ,d_x-1$ and $d_x$ is the degree of $x$.
The ordering $c(x)$ is called the
\emph{rotor sequence} of $x$. A \emph{rotor configuration} is a function
$\rho: \G\to \G$, with $\rho(x) \sim_{\G} x$, for all $x\in \G$.
Hence $\rho$ assigns to every vertex one of its neighbours.
By abuse of notation, we write $\rho(x)=i$ if the rotor at $x$ points
to the neighbour $x^{(i)}$, with $i\in\{0,1,\ldots,d_x-1\}$.

A rotor-router walk is defined by the following rule.
Let $x$ be the current position of the particle, and $\rho(x) = i$ the state of
the rotor at $x$. In one step of the walk two things happen. First the position of the
rotor at $x$ is incremented to point to the next neighbour $x^{(i+1)}$ in the ordering $c(x)$, that is,
$\rho(x)$ is set to $i+1$ (with addition performed modulo $d_x$). Then the
particle moves to position $x^{(i+1)}$. The rotor-router walk is obtained by repeatedly
applying this rule.

Suppose now that $\G$ is a finite graph with $m$ vertices and fix a vertex $s$
in $\G$, which will represent the sink.

\paragraph*{Rotor-Router Group. }
Given a rotor configuration $\rho$ on $\G$, write $e_x(\rho)$ for the rotor
configuration resulting from starting a particle at $x$ and letting it perform
a rotor-router walk until it reaches the sink $s$. If a particle visits a vertex infinitely
often, it also visits all of its neighbours infinitely often; since $\G$ is connected
and finite, the particle eventually reaches the sink.

The set of edges $\big\{\big(x,\rho(x)\big):x\in \G\setminus\{s\}\big\}$ in a
rotor configuration forms a spanning 
subgraph of $\G$ in which every vertex except the sink $s$ has out-degree one. If this
subgraph contains no cycles, we call it an {\em oriented spanning tree} of $\G$.
Write $\rec(\G)$ for the set of oriented spanning trees of $\G$, which is also called
the set of recurrent configurations. It is easy to see that
if $\rho \in\rec(\G)$, then also $e_x(\rho)\in\rec(\G)$. For a proof, see 
{\sc Landau and Levine} \cite[Lemma 2.1]{landau_levine_2009}. Another
interesting property of the rotor configurations is that if $\rho_1,\rho_2\in\rec(\G)$
and $e_x(\rho_1)=e_x(\rho_2)$, then $\rho_1=\rho_2$; see once again \cite{landau_levine_2009}
for a proof. This means that the operation $e_x$ of adding a particle at $x$
and routing it to the sink acts invertibly on the set of recurrent rotor configurations.

The {\em rotor-router group $\rr(\G)$} of $\G$ is defined as the subgroup of the permutation
group of $\rec(\G)$ generated by $\big\{e_x:\: x\in \G\setminus\{s\}\big\}$.
For any two vertices $x$ and $y$,
the operators $e_x$ and $e_y$ commute. This is the so-called {\em abelian property}
of rotor-router walks. Hence the group $\rr(\G)$ is {\em abelian}.
Furthermore, $\rr(\G)$ acts transitively on $\rec(\G)$. More details can be found in
\cite{landau_levine_2009}.

\paragraph*{Sandpile Group.}
A {\em chip configuration} $\sigma$ on $\G$, also called a {\em sandpile} on $\G$, is a vector in 
$\Z^{m-1}$ of non-negative integers indexed by the non-sink vertices of $\G$, where $\sigma(x)$ represents the number
of chips at the vertex $x$. A chip configuration $\sigma$ is called {\em stable} if $\sigma(x)<d_{x}$,
for every non-sink vertex $x$. A vertex $x$ is {\em unstable} if $\sigma(x)\geq d_{x}$.
An unstable vertex may {\em topple}, by sending one chip to each neighbour. If $\sigma$ is 
not stable then one can show that by successively toppling unstable vertices, in finitely
many steps we arrive at a stable configuration $\sigma^\circ$. A stable chip configuration $\sigma$
is called {\em recurrent} if there exists a nonzero chip configuration $\delta$ such that 
$(\sigma+\delta)^\circ=\sigma$. The {\em sandpile group}
$\sand(\G)$ may be thought of as the set of recurrent chip configurations under the
operation $(\sigma+\delta)^\circ$ of addition followed by stabilization.
The order of the sandpile group $\sand(\G)$ is given by the determinant of the reduced Laplacian $\Delta'$,
compare with \cite[Lemma 2.8]{chip_rotor_2008}.
The {\em graph Laplacian} of $\G$ is the matrix $\Delta$ with entries
\begin{equation*}
\Delta_{ij}=
\begin{cases}
 d_{i}-d_{ii} & \text{ for } i=j,\\
 -d_{ij} & \text{ for } i\neq j.
\end{cases}
\end{equation*} 
The reduced Laplacian is 
obtained by deleting from the Laplacian matrix $\Delta$  of $\G$ the row and the column corresponding to the sink.
By the matrix-tree theorem, this determinant equals the number of {\em oriented spanning trees} of $\G$ rooted at the sink.

\begin{thm}\label{thm:rr_sp}
The rotor-router group $\rr(\G)$ for a connected finite graph $\G$ with a global sink is isomorphic to
its sandpile group $\sand(\G)$.
\end{thm}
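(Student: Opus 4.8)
The plan is to realise both groups as the cokernel of the reduced Laplacian, so that they become two descriptions of the same finite abelian group. Write $n=m-1$ for the number of non-sink vertices and let $\Z^n$ be the free abelian group with basis indexed by those vertices. Define a homomorphism $\Phi\colon \Z^n\to\rr(\G)$ by sending the basis vector at a vertex $x$ to the generator $e_x$; this is well defined precisely because the operators $e_x$ commute (the abelian property), and it is surjective because the $e_x$ generate $\rr(\G)$ by definition. The goal is to show that $\Phi$ descends to an isomorphism $\Z^n/\Delta'\Z^n \xrightarrow{\sim} \rr(\G)$. Since the sandpile group $\sand(\G)$ is, by the standard theory of recurrent configurations, isomorphic to this same cokernel $\Z^n/\Delta'\Z^n$, the theorem then follows. (Passing between $\Delta'$ and its transpose is harmless here, as a matrix and its transpose share a Smith normal form and hence have isomorphic cokernels.)

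First I would pin down the orders. Because $\rr(\G)$ is by definition a subgroup of the symmetric group on $\rec(\G)$, it acts faithfully; it is abelian and, as recalled in the excerpt, acts transitively on $\rec(\G)$. A faithful transitive action of an abelian group is regular: transitivity makes all point-stabilisers conjugate, commutativity makes conjugate subgroups equal, and a stabiliser common to all points fixes every point, so faithfulness forces it to be trivial. Hence $|\rr(\G)|=|\rec(\G)|$. As $\rec(\G)$ is exactly the set of oriented spanning trees of $\G$ rooted at the sink, the matrix-tree theorem gives $|\rec(\G)|=\det\Delta'=|\sand(\G)|$. Thus the two groups already have the same order, and it remains only to produce a surjection between them.

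The heart of the matter is to verify that the Laplacian relations hold among the generators, that is $\Delta'\Z^n\subseteq\ker\Phi$. Concretely, for each non-sink vertex $v$ I would establish the \emph{full-rotation} identity
\[
e_v^{\,d_v-d_{vv}} \;=\; \prod_{w\neq v,\ w\neq s} e_w^{\,d_{vw}}
\]
in $\rr(\G)$, which is exactly the relation coming from the $v$-th row of $\Delta'$. The idea is to route $d_v$ particles out of $v$ but to schedule the moves cleverly: process one step of each of the $d_v$ particles first. Since the rotor at $v$ advances by one at each such step, after $d_v$ steps it has made a full turn back to its initial position, and the particles have been dispatched one along each outgoing edge, so $d_{vw}$ of them arrive at each neighbour $w$ (with $d_{vv}$ returning to $v$ and any sent to the sink simply absorbed). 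Completing all of these walks and invoking the abelian property — the final configuration is independent of how the single moves are interleaved — identifies $e_v^{d_v}$ with $e_v^{d_{vv}}\prod_{w\neq v}e_w^{d_{vw}}$, and cancelling $e_v^{d_{vv}}$ yields the displayed relation. Granting this, $\Phi$ factors through $\Z^n/\Delta'\Z^n$ to give a surjection onto $\rr(\G)$ between finite groups of equal order $\det\Delta'$, hence an isomorphism; combined with $\sand(\G)\cong\Z^n/\Delta'\Z^n$ this proves $\rr(\G)\cong\sand(\G)$.

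The main obstacle is making the scheduling argument of the previous paragraph rigorous. It rests on a strengthening of the abelian property stated in the excerpt: not only do the completed operators $e_x$ and $e_y$ commute, but the individual rotor steps may themselves be reordered freely (the ``stack of rotors''/abelian-network formulation), with the process terminating and producing the same output under any legal schedule. I would isolate this as a preliminary lemma, or cite it from \cite{landau_levine_2009,chip_rotor_2008}, and then use it both to license the ``first steps first'' reordering and to guarantee that the particles accumulating at each $w$ contribute exactly $e_w^{d_{vw}}$ once their walks are finished. Self-loops and edges into the sink need only a little extra bookkeeping and fit the same scheme, the sink acting throughout as an absorbing vertex whose operator is the identity.
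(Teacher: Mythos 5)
Your proposal is correct and follows essentially the same route as the proof the paper relies on: the paper gives no argument of its own but defers to Landau and Levine, whose proof likewise realises both groups as the cokernel of the reduced Laplacian via the full-rotation relation $e_v^{d_v-d_{vv}}=\prod_{w\neq v,s}e_w^{d_{vw}}$, the free and transitive action on recurrent configurations, and the matrix-tree theorem. The one step you rightly flag as the technical heart --- the strong abelian property licensing arbitrary reordering of individual rotor steps --- is exactly what that reference supplies.
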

The proof can be found in \cite[Theorem 2.5]{landau_levine_2009}.

Notation: two non-negative functions $f(n)$ and $g(n)$ have the same growth and we write 
$f(n)\asymp g(n)$ if there exist constants $c_1,c_2$ such that
$  c_1 g(n)\leq f(n)\leq c_2 g(n)$.

For the rest, we fix the following: 
\begin{itemize}
 \item $\G$ finite graph with $m$ vertices labelled by $\{1,2,\ldots ,m\}$.
 \item $\T_i$ directed cover of $\G$ with root $r$ of type $i\in\G$.
 \item $\wiredT_i$ the wired directed cover of height $h$ and root type $i$.
 \item $\tau(x)$ label function and $\chi_i(k)$ generation function.
\end{itemize}

%==========================================================================================
\section{Order of the Rotor-Router Group}\label{sec:order_rr_group}

In this section we want to describe the rotor-router group on directed covers of
finite graphs. For homogeneous trees, this was done in {\sc Levine}\cite{levine_sandpile_tree},
and the method used there fails when one considers non-homogeneous structures.
Because of the non homogeneity, our approach is also quite technical.
We will relate the rotor-router group of a wired directed cover of a graph with the
rotor-router group of its principal subbranches.
Denote by $t_i^h$ the {\em number of spanning trees} of $\wiredT_i$. 

From Section \ref{subsec:rr_walks}, 
in order to find an expression for the order $|\rr(\wiredT_i)|$ of the rotor-router
group of $\wiredT_i$, it is enough to count the number $t_i^h$ of the spanning trees,
since $|\rr(\wiredT_i)|=t_i^h$.
We will count spanning trees in terms of a class of spanning forests of the original tree $\T^h_i$,
which we define next.
\begin{defn}
We say that a spanning forest of a graph $\H$ is {\em rooted at a set of vertices} $S\subset \H$
if every connected component of the forest contains exactly one vertex of $S$.
\end{defn}
Denote by $\H/S$ the multigraph obtained from $\H$ by contracting $S$ into a single vertex $s$, while
keeping multiple edges that may have been created by this process.
\begin{lem}
\label{lem:trees_and_forests}
There is a bijection between the set of spanning trees of $\H/S$ and the set of spanning forests
of $\H$ which are rooted at $S$.
\end{lem}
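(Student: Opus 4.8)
The plan is to realise the bijection entirely at the level of edge sets, building explicit maps in both directions and checking they are mutually inverse. Write $\pi\colon V(\H)\to V(\H/S)$ for the contraction map, which is the identity on $V(\H)\setminus S$ and sends all of $S$ to the single vertex $s$, and let $n$ denote the number of vertices of $\H$. The first observation I would record is that $\pi$ induces a bijection between the edges of $\H$ having \emph{at most one} endpoint in $S$ and the edges of $\H/S$: an edge with no endpoint in $S$ is unchanged, an edge with exactly one endpoint in $S$ has that endpoint relabelled to $s$, and an edge with both endpoints in $S$ becomes a loop at $s$, which no spanning tree can use. Using this edge correspondence I define $\Phi$ (forests $\to$ trees) by contracting $S$, and $\Psi$ (trees $\to$ forests) by lifting each edge of the tree to its unique preimage in $\H$ and viewing the resulting edge set as a spanning subgraph of $\H$ on all of $V(\H)$. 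Since both maps merely transport edge sets through one fixed bijection, once each is shown to land in the correct set they are automatically inverse to one another.

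For the direction $\Phi$, let $F$ be a spanning forest of $\H$ rooted at $S$. Because each component of $F$ meets $S$ in exactly one vertex, no edge of $F$ joins two vertices of $S$, so contraction produces no loops and $\Phi(F)$ is a genuine subgraph of $\H/S$. A forest on $n$ vertices with $\abs{S}$ components has $n-\abs{S}$ edges, which is exactly the edge count of a spanning tree of $\H/S$ (the latter has $n-\abs{S}+1$ vertices). Contracting $S$ merges the $\abs{S}$ components of $F$ at the single vertex $s$, so $\Phi(F)$ is connected; a connected spanning subgraph with $n-\abs{S}$ edges on $n-\abs{S}+1$ vertices is a tree.

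The direction $\Psi$ is where the real work lies, and I expect the acyclicity argument to be the main obstacle. Let $T$ be a spanning tree of $\H/S$ and suppose, for contradiction, that its lift $\Psi(T)$ contains a simple cycle $C$. The edges of $C$ are distinct and, by the edge correspondence, none of them lies within $S$, and none is a loop (since $T$ has none); hence $C$ has at least two edges, which $\pi$ carries to at least two \emph{distinct} edges of $T$ tracing a closed trail. The edge set of a nonempty closed trail spans a subgraph in which every vertex has even degree, and such a subgraph always contains a cycle — contradicting that $T$ is a tree. The subtle point is precisely that a cycle of $\Psi(T)$ may pass through several vertices of $S$, all of which collapse to $s$; routing the argument through closed trails and the even-degree lemma is what makes it go through cleanly no matter how many vertices of $S$ the cycle meets.

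It then remains to identify the component structure. Acyclicity together with the edge count $n-\abs{S}$ forces $\Psi(T)$ to have exactly $\abs{S}$ components. To see that each contains exactly one vertex of $S$ it suffices, by pigeonhole (there being $\abs{S}$ components and $\abs{S}$ vertices of $S$), to rule out a component $K$ disjoint from $S$. Such a $K$ would consist only of vertices outside $S$, so $\pi$ restricts to an injection on $K$, and since $K$ is a component of $\Psi(T)$ no edge of $T$ leaves $\pi(K)$; thus $\pi(K)$ would be a connected component of $T$ avoiding $s$, contradicting the connectedness of $T$. This establishes that $\Psi(T)$ is a spanning forest rooted at $S$, and the computation $\Phi\circ\Psi=\mathrm{id}$ and $\Psi\circ\Phi=\mathrm{id}$ is immediate from the edge correspondence, completing the proof.
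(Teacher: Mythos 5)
Your proof is correct and follows essentially the same route as the paper's: both realise the bijection through the edge correspondence induced by contracting $S$, with explicit maps in each direction that merely transport edge sets. Your verification is in places more complete than the paper's --- notably the acyclicity of the lifted tree (via closed trails and the even-degree argument) and the exact component count via edge counting, steps the paper largely asserts --- but the underlying construction is identical.
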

\begin{proof}
Let $F$ be a spanning forest of $\H$ and denote by $F/S$ the spanning subgraph of $\H/S$ obtained by
contracting $S$ into a single vertex $s$. If $F$ has a connected component which does not
contain a vertex of $S$, its contraction $F/S$ is still not connected. On the other hand, if $F$
has a connected component which contains at least two vertices of $S$, collapsing $S$ into a single
vertex creates a cycle in $F/S$. Hence $F/S$ is a spanning tree of $\H/S$ if and only if $F$ is
a spanning forest rooted at $S$.

Let $E_S$ be the set of edges $e$ of $\H$ such that not both endpoints of $e$ are contained in $S$.
Consider the map $\psi: \H \to \H/S$, defined by
\begin{equation*}
\psi(v) = \begin{cases}
s,\quad &\text{for } v \in S \\
v,      &\text{otherwise},
\end{cases}
\end{equation*}
and its natural extension to the edge set $E$ of $\H$ which we also call $\psi$.
Then $\psi$ is a bijection between $E_S$ and the edges of $\H/S$.
Since every spanning forest $F$ of $\H$ rooted at $S$ is a subset of $E_S$,
the map $\psi$ naturally extends to a map $\widehat{\psi}: F\mapsto F/S$,
which maps $F$ onto its contraction. Hence $\widehat{\psi}$ is injective,
since the tree $F/S$ is fully defined by its edge set.

Now let $T$ be a spanning tree of $\H/S$, and define $F = \{\psi^{-1}(e):\: e\text{ is a edge of } T\}$.
Then $F$ is a forest from which we may obtain a spanning forest rooted at $S$ by adding every vertex of
$S$ which is not contained in a connected component of $F$ as a single vertex component. This construction
is again injective. Hence there exists a bijection between the set of spanning trees of $\H/S$ and the set
of spanning forests of $\H$ rooted on $S$.
\end{proof}

Back to the truncated tree $\T^h_i$, let us introduce {\em the down and up sinks}
\begin{equation}
\label{eq:s_down_s_up}
s_\down=\{r^{(0)}\} \text{ and } s^\up=\{x\in\T_i^h:|x|=h\},
\end{equation}
where $r^{(0)}$ is the ancestor of the root $r$, an additional vertex connected with the root 
$r$ of $\T_i^h$, and let
\begin{equation}
\label{eq:def_sink}
S=s_\down \cup s^\up.
\end{equation}
With the notation introduced above we have $\widetilde{\T}_i^h = \T^h_i / S$.
According to Lemma \ref{lem:trees_and_forests}, in order to compute the order of the
rotor-router group, one has to count the number of spanning forests of $\T_i^h$
rooted at $S$. We partition the spanning forests into two types. For all $i \in \G$
denote by
\begin{center}
$F^h_{i,\down} =$ \begin{minipage}[t]{0.6\linewidth}
the number of spanning forests of $\T_i^h$ rooted at $S$,
which contain an edge from $r$ to $s_\down$,
\end{minipage}

$F^h_{i,\up} =$ \begin{minipage}[t]{0.6\linewidth}
the number of spanning forests of $\T_i^h$ rooted at $S$,
which contain a path from $r$ to $s^\up$.
\end{minipage}
\end{center}
Using this notation we have that
\begin{equation}
 \big|\rr(\wiredT_i)\big|=t^h_i=F^h_{i,\down}+F^h_{i,\up}.
\end{equation}
and the order of the rotor-router group can be calculated recursively as follows.
\begin{thm}\label{thm:recurent_rotor_group}
The order of the rotor-router group $\rr(\wiredT_i)$ is given by 
$\big|\rr(\wiredT_i)\big|=F^h_{i,\down}+F^h_{i,\up}$, and 
the number of spanning forests $F^h_{i,\down}$ and $F^h_{i,\up}$ with $i\in\G$
can be calculated recursively as:
\begin{equation}\label{eq:span_forests}
\begin{eqsystem}
 F^h_{i,\down}  &= \prod_{j\in\G}\big(F_{j,\down}^{h-1}+F_{j,\up}^{h-1}\big)^{d_{ij}} \\
 F^h_{i,\up}  &= F^h_{i,\down} \sum_{j\in\G} \dfrac{d_{ij}F_{j,\up}^{h-1}}{F_{j,\down}^{h-1}+F_{j,\up}^{h-1}}.
\end{eqsystem}
\end{equation}
The initial values are $F_{i,\down}^1=1$ and $F_{i,\up}^1=d_i$, for all $i\in \G$.
\end{thm}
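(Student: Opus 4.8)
The plan is to prove both recursions simultaneously by decomposing a spanning forest of $\T_i^h$ rooted at $S$ along the principal branches of the root. Write $r$ for the root (of type $i$), $r^{(0)}$ for its ancestor (so $s_\down=\{r^{(0)}\}$), and let $x_1,\dots,x_{d_i}$ be the children of $r$, where $x_k$ has type $\chi_i(k)$. The cone $C_{x_k}$, as it sits inside $\T_i^h$, is an isomorphic copy of $\T_{\chi_i(k)}^{h-1}$ whose own down-sink is $r$ and whose up-sink consists of the height-$h$ leaves lying below $x_k$. A spanning forest $F$ rooted at $S$ restricts on each $C_{x_k}$ to a forest $F_k$, and $F$ is reconstructed from the $F_k$ together with the choice of which edges incident to $r$, namely $(r,r^{(0)})$ and the $(r,x_k)$, belong to $F$.

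First I would establish the branch dichotomy. Every component of $F_k$ not containing $x_k$ is already a component of $F$, so it must carry exactly one $S$-vertex, which can only be a leaf. Hence in $F_k$ the component of $x_k$ either contains no leaf (I call this \emph{down-type}) or exactly one leaf (\emph{up-type}); these are the only possibilities since that component will hold at most one $S$-vertex of $F$. Reattaching the cone's down-sink $r$ to $x_k$ in the down-type case, and leaving it isolated in the up-type case, gives a bijection between down-type (resp.\ up-type) restrictions $F_k$ and the forests of $\T_{\chi_i(k)}^{h-1}$ counted by $F^{h-1}_{\chi_i(k),\down}$ (resp.\ $F^{h-1}_{\chi_i(k),\up}$).

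Next I would analyse the component of $r$. The key bookkeeping observation is that a down-type branch, whose $x_k$-component contains no $S$-vertex, is \emph{forced} to attach to $r$ via the edge $(r,x_k)$ in order to reach an $S$-vertex, whereas an up-type branch already carries its own leaf so that $(r,x_k)$ is optional. Splitting according to whether the unique $S$-vertex in the component of $r$ is $r^{(0)}$ or a leaf gives exactly $F^h_{i,\down}$ and $F^h_{i,\up}$. In the first case $(r,r^{(0)})$ is present and no up-type branch may connect to $r$ (its leaf would be a second $S$-vertex); then each branch independently contributes the factor $F^{h-1}_{\chi_i(k),\down}+F^{h-1}_{\chi_i(k),\up}$ (down-type with its forced edge, or up-type left detached), and grouping the $d_{ij}$ children of each type $j$ yields
\begin{equation*}
F^h_{i,\down}=\prod_{j\in\G}\big(F^{h-1}_{j,\down}+F^{h-1}_{j,\up}\big)^{d_{ij}}.
\end{equation*}
In the second case $(r,r^{(0)})$ is absent and exactly one up-type branch, say $x_{k_0}$, attaches to $r$ and supplies the unique leaf, contributing $F^{h-1}_{\chi_i(k_0),\up}$, while the remaining branches again contribute $F^{h-1}_{\chi_i(k),\down}+F^{h-1}_{\chi_i(k),\up}$ each. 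Summing over the choice of $k_0$, grouping by type, and dividing by $F^h_{i,\down}$ gives the stated sum formula for $F^h_{i,\up}$.

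Finally I would verify the base case $h=1$, where $\T_i^1$ is the star with centre $r$ whose leaves are $r^{(0)}$ together with the $d_i$ children that now constitute $s^\up$; here $r$ must join exactly one of these, giving the single down-forest ($F^1_{i,\down}=1$) and the $d_i$ up-forests ($F^1_{i,\up}=d_i$). The main obstacle I anticipate is precisely the edge-bookkeeping at $r$: making rigorous that down-type branches force their incident edge while up-type branches leave it free, and that in the down-case this freedom recombines cleanly into the per-child factor $F^{h-1}_{j,\down}+F^{h-1}_{j,\up}$, whereas in the up-case it forces the selection of a single distinguished branch.
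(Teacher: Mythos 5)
Your decomposition is the same one the paper uses: classify the restriction of the forest to each principal branch as down-type (its $x_k$-component reaches no sink vertex, so the edge to $r$ is forced) or up-type (it already owns a leaf, so the edge to $r$ is optional), and then split according to whether the component of $r$ captures $s_\down$ or a leaf. Your base case and your derivation of $F^h_{i,\down}$ coincide with the paper's argument. Where you genuinely diverge is the $F^h_{i,\up}$ recursion: the paper first records the count as $\sum_{w\in\{\down,\up\}^{d_i}} U(w)\prod_{k=1}^{d_i} F^{h-1}_{\chi_i(k),w_k}$, with $U(w)$ the number of up-letters, and then devotes most of the proof to an induction on the entries of the adjacency matrix $D$ to show this equals the stated product-times-sum form. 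You instead single out from the start the unique up-type branch that attaches to $r$ and supplies the one leaf in $r$'s component, sum over that choice, and read off $F^h_{i,\up}=\sum_{k_0} F^{h-1}_{\chi_i(k_0),\up}\prod_{k\neq k_0}\bigl(F^{h-1}_{\chi_i(k),\down}+F^{h-1}_{\chi_i(k),\up}\bigr)$, which regroups by type into the closed form directly (the division by $F^{h-1}_{j,\down}+F^{h-1}_{j,\up}$ is harmless since $F^{h-1}_{j,\down}\geq 1$). This is correct and bypasses the paper's algebraic induction entirely; the only thing to make explicit in a final write-up is the observation you already flag, namely that a component of $r$ with no $S$-vertex is impossible, so your two cases are exhaustive. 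I see no gaps.
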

\begin{proof}
Let $\T_i^h$ be a tree with root $r$ of type $i\in G$, and of height $h$.
For $k=1,\ldots,d_i$, we denote by $\T_{\chi_i(k)}^{h-1}$ the cone of the $k$-th child $r^{(k)}$ of the 
root $r$ in $\T_i^h$. By the construction of the directed cover, the root $r^{(k)}$ of $\T_{\chi_i(k)}^{h-1}$ is of type $\chi_i(k)$.

Every spanning forest $\mathcal{F}$ that is counted in $F_{i,\down}^h$ contains the edge $(r,s_\down)$. In this case, whenever
the edge $(r^{(k)},r)$ is contained in $\mathcal{F}$, its restriction to $\T_{\chi_i(k)}^{h-1}$
does not contain a path from $r^{(k)}$ to the upper sink
of $\T_{\chi_i(k)}^{h-1}$. On the other hand, if the forest $\mathcal{F}$ does not contain the edge $(r^{(k)},r)$, then the 
restriction of the spanning forest to the first level cone must contain a path from $r^{(k)}$ to the upper sink. Otherwise the connected component containing
$r^{(k)}$ would not contain a sink vertex. As we can freely choose which of the vertices $\{(r^{(k)},r): k=1,\ldots,d_i\}$ are
part of the forest, we get
\begin{equation*}
F^h_{i,\down} = \sum_{w \in \{\down,\up\}^{d_i}} \prod_{k=1}^{d_i} F^{h-1}_{\chi_i(k),w_k}
\end{equation*}
where $w = (w_1,w_2,\dots,w_{d_i})$ is a word of length $d_i$ over the alphabet $\{\down,\up\}$.
In order to reduce the previous equation to the form given in \eqref{eq:span_forests}, take a
$j\in\G$ with $d_{ij}\neq 0$. This implies that for some $k=1,\ldots,d_i$
we have $\chi_i(k)=j$ (this happens exactly $d_{ij}$ times) and $F^h_{i,\down}$ can be factorized as
\begin{equation*}
 F^h_{i,\down}=\Big(F^{h-1}_{j,\down}+F^{h-1}_{j,\up}\Big)\sum_{w \in \{\down,\up\}^{d_i-1}} \prod_{k=1}^{d_i-1} F^{h-1}_{\chi_i(k),w_k}
\end{equation*}
This procedure can be repeated exactly $d_{ij}$ times, and we obtain
\begin{equation*}
 F^h_{i,\down}=\Big(F^{h-1}_{j,\down}+F^{h-1}_{j,\up}\Big)^{d_{ij}}\sum_{w \in \{\down,\up\}^{d_i-d_{ij}}} \prod_{k=1}^{d_i-d_{ij}} F^{h-1}_{\chi_i(k),w_k} 
\end{equation*}
By proceeding in the same way for all $j\in\G$ for which $d_{ij}\neq 0$, we get
\begin{equation*}
F^h_{i,\down}  = \prod_{j\in\G:\:d_{ij}\neq 0}\big(F_{j,\down}^{h-1}+F_{j,\up}^{h-1}\big)^{d_{ij}},
\end{equation*}
which can be obviously extended to all $j\in\G$, and we have proved the first part of \eqref{eq:span_forests}.

Consider now a spanning forest $\mathcal{F}$ with the root connected to the upper sink $s^\up$. Thus, whenever $m$ edges
from the set $\{(r^{(k)},r): k=1,\ldots,d_i\}$ are contained in $\mathcal{F}$, there are exactly $d_i - m$ possible ways
to connect $r$ to $s^{\up}$. Hence
\begin{equation}\label{eq:F_up}
F^h_{i,\up} = \sum_{w \in \{\down,\up\}^{d_i}} U(w)\prod_{k=1}^{d_i} F^{h-1}_{\chi_i(k),w_k},
\end{equation}
where $U(w) = \#\big\{k\in\{1,\ldots,d_i\}: w_k=\up\big\}$ represents the number of letters $\up$ in the word $w$.
Note that the number of spanning forests depends only on the adjacency matrix $D$ of the generating graph,
and not on the particular planar embedding $\chi$ of the tree. Therefore, to prove the equivalence of
\eqref{eq:F_up} and \eqref{eq:span_forests}, we can use induction on the number of children of
a vertex of a certain type. 

For the rest of the proof, fix $i\in\G$ and denote by $D^i$ the $i$-th row of the adjacency matrix  $D$ of $\G$.
If a vertex of type $i$ has only one child, say of type $l$, that is, $D^i=\big(\delta_j(l)\big)_{j\in\G}$,
then \eqref{eq:F_up} reduces to 
\begin{equation*}
 F^h_{i,\up}=\sum_{w\in\{\down,\up\}}U(w)F^{h-1}_{l,w}=F^{h-1}_{l,\up}
\end{equation*}
which proves the induction base. Assume now that for all possible choices of 
$F^{h-1}_{i,\up},F^{h-1}_{i,\down}\in\R$ and of the adjacency matrix $D$, the following holds
\begin{equation}\label{eq:equiv}
\sum_{w \in \{\down,\up\}^{d_i}} U(w)\prod_{k=1}^{d_i} F^{h-1}_{\chi_i(k),w_k}= 
\prod_{j\in\G}\big(F_{j,\down}^{h-1}+F_{j,\up}^{h-1}\big)^{d_{ij}}
\sum_{j\in\G} \dfrac{d_{ij}F_{j,\up}^{h-1}}{F_{j,\down}^{h-1}+F_{j,\up}^{h-1}},
\end{equation}
and we prove that it also holds if we increase one entry of $D$ by $1$.
Let us suppose that we increase the $d_{il}$-entry by $1$, for some fixed $l\in\G$.
This means that any vertex of type $i$ has an additional child of type $l$.
Let us denote by $\bar{D}$ this new matrix and by $\bar{d}_i$ and $\bar{\chi}_i$ the quantities
corresponding to the degree of a vertex and to the generation function for $\bar{D}$.
Due to the fact that the number of forests does not depend on the planar embedding, i.e.,
on the function $\chi_i$, we can put the new additional child of type $l$ on the rightmost position,
that is $\bar{\chi}_i(\bar{d}_i)=l$ and $\bar{\chi}_i(k)=\chi_i(k)$ for $k<\bar{d}_i$. Then we have
\begin{equation*}
\sum_{w \in \{\down,\up\}^{\bar{d}_i}} U(w)\prod_{k=1}^{\bar{d}_i} F^{h-1}_{\bar{\chi}_i(k),w_k}  =
\sum_{w \in \{\down,\up\}^{d_i+1}} U(w) \left(\prod_{k=1}^{d_i} F^{h-1}_{\chi_i(k),w_k}\right)F^{h-1}_{l,w_{d_i+1}}
\end{equation*}
which, factoring the word $w$ by its last letter, is equal to
\begin{align*}
 = \sum_{w \in \{\down,\up\}^{d_i}}\big(U(w)+1\big)\left(\prod_{k=1}^{d_i} F^{h-1}_{\chi_i(k),w_k}\right) F^{h-1}_{l,\up} 
 +\sum_{w \in \{\down,\up\}^{d_i}}U(w) \left(\prod_{k=1}^{d_i} F^{h-1}_{\chi_i(k),w_k}\right) F^{h-1}_{l,\down} \\
 =(F^{h-1}_{l,\up}+F^{h-1}_{l,\down})\sum_{w \in \{\down,\up\}^{d_i}}U(w)\prod_{k=1}^{d_i} F^{h-1}_{\chi_i(k),w_k}
 +F^{h-1}_{l,\up}\prod_{j\in\G}\big(F_{j,\down}^{h-1}+F_{j,\up}^{h-1}\big)^{d_{ij}}.
\end{align*}
Using the induction hypothesis \eqref{eq:equiv} this further equals
\begin{multline*}
\prod_{j\in\G}\big(F_{j,\down}^{h-1}+F_{j,\up}^{h-1}\big)^{d_{ij}+\delta_j(l)}
\sum_{j\in\G} \dfrac{d_{ij}F_{j,\up}^{h-1}}{F_{j,\down}^{h-1}+F_{j,\up}^{h-1}}
 +F^{h-1}_{l,\up}\prod_{j\in\G}\big(F_{j,\down}^{h-1}+F_{j,\up}^{h-1}\big)^{d_{ij}} \\
= \prod_{j\in\G}\big(F_{j,\down}^{h-1}+F_{j,\up}^{h-1}\big)^{d_{ij}+\delta_j(l)}
   \sum_{j\in\G} \dfrac{\big(d_{ij}+\delta_j(l)\big)F_{j,\up}^{h-1}}{F_{j,\down}^{h-1}+F_{j,\up}^{h-1}},
\end{multline*}
which proves the inductive step.
\end{proof}

Next, we evaluate asymptotically the behaviour of  $\big|\rr(\wiredT_i)\big|=F^h_{i,\down}+F^h_{i,\up}$,
for large values of $h$. 

\begin{lem}\label{lem:gamma_i}
For $i\in\G$ and $h\in\N$, the sequence $\gamma_i^h$ defined by
\begin{equation*}
\gamma_i^h=\frac{F^h_{i,\up}}{F^h_{i,\down}}
\end{equation*}
is convergent. If the spectral radius $\r(D) > 1$ then the limit $\varUpsilon_i=\lim_{h\to\infty}\gamma_i^h$ is positive.
\end{lem}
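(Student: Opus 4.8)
The plan is to collapse the two recursions of Theorem \ref{thm:recurent_rotor_group} into a single recursion for the ratios. Dividing the formula for $F^h_{i,\up}$ by that for $F^h_{i,\down}$ and writing $g(x)=x/(1+x)$, one obtains
\[
\gamma_i^h=\sum_{j\in\G} d_{ij}\,\frac{\gamma_j^{h-1}}{1+\gamma_j^{h-1}}=\sum_{j\in\G} d_{ij}\,g\big(\gamma_j^{h-1}\big),
\]
with initial data $\gamma_i^1=F^1_{i,\up}/F^1_{i,\down}=d_i$. Thus, setting $\gamma^h=(\gamma_i^h)_{i\in\G}$ and $\Phi(\mathbf{x})_i=\sum_{j} d_{ij}\,g(x_j)$, the whole statement reduces to understanding the orbit $\gamma^h=\Phi(\gamma^{h-1})$ of the nonlinear map $\Phi$ on $\R_{\ge 0}^m$.

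For convergence I would use that $g$ is increasing on $[0,\infty)$, so that $\Phi$ is order-preserving, together with the elementary bound $g(x)=x/(1+x)\le 1$. The latter gives the base step $\gamma_i^2=\sum_j d_{ij}\,g(d_j)\le \sum_j d_{ij}=d_i=\gamma_i^1$, that is $\gamma^2\le\gamma^1$ componentwise. Applying the monotone map $\Phi$ repeatedly then yields $\gamma^{h+1}=\Phi(\gamma^h)\le\Phi(\gamma^{h-1})=\gamma^h$ by induction, so each coordinate sequence $(\gamma_i^h)_h$ is nonincreasing and bounded below by $0$; hence it converges to some $\varUpsilon_i\ge 0$. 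Passing to the limit in the recursion (using continuity of $g$) shows that $\varUpsilon=(\varUpsilon_i)$ is a fixed point, $\varUpsilon_i=\sum_j d_{ij}\,g(\varUpsilon_j)$.

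It then remains to prove positivity when $\r(D)>1$. First a dichotomy: since all summands are nonnegative, $\varUpsilon_i=0$ forces $g(\varUpsilon_j)=0$, hence $\varUpsilon_j=0$, for every $j$ with $d_{ij}>0$; as $D$ is irreducible ($\G$ is strongly connected), propagating this along directed paths shows that $\varUpsilon$ is either strictly positive or identically zero, so I only need to exclude $\varUpsilon\equiv 0$. Note also that $\gamma^h$ stays strictly positive for every finite $h$ by irreducibility. Assume $\varUpsilon\equiv 0$. For $\epsilon>0$ choose $H$ with $\gamma_j^h\le\epsilon$ for all $h\ge H$ and all $j$; using $g(x)=x/(1+x)\ge x/(1+\epsilon)\ge(1-\epsilon)x$ on $[0,\epsilon]$, the recursion gives $\gamma^h\ge(1-\epsilon)\,D\gamma^{h-1}$ componentwise for $h>H$. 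Let $\mathbf{u}>0$ be the left Perron eigenvector of the irreducible matrix $D$, normalized so that $\mathbf{u}^{T}D=\lambda\,\mathbf{u}^{T}$ with $\lambda=\r(D)$, and set $s_h=\mathbf{u}^{T}\gamma^h>0$. Then $s_h\ge(1-\epsilon)\lambda\,s_{h-1}$, and choosing $\epsilon$ small enough that $(1-\epsilon)\lambda>1$ forces $s_h\to\infty$, contradicting $s_h\to\mathbf{u}^{T}\varUpsilon=0$. Hence $\varUpsilon>0$.

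The main obstacle is this positivity claim: the convergence is a routine monotone-bounded argument, but ruling out $\varUpsilon\equiv 0$ requires linearizing $\Phi$ at the origin, where $g'(0)=1$ makes $D$ itself the derivative, and then converting the spectral condition $\r(D)>1$ into genuine exponential growth. The delicate point is to make the local estimate $g(x)\ge(1-\epsilon)x$ interact correctly with the positive Perron eigenvector, so that the geometric blow-up of $s_h$ is established cleanly and contradicts the assumed decay to zero; everything else is bookkeeping.
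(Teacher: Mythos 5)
Your proof is correct, and its first half coincides with the paper's: both derive the recursion $\gamma_i^h=\sum_j d_{ij}\,\gamma_j^{h-1}/(1+\gamma_j^{h-1})$ from Theorem \ref{thm:recurent_rotor_group}, prove $\gamma^{h+1}\le\gamma^h$ by monotonicity of $x\mapsto x/(1+x)$ starting from $\gamma^2\le\gamma^1=(d_1,\dots,d_m)$, and conclude convergence to a nonnegative fixed point. The positivity half, however, is genuinely different. The paper first uses a Collatz--Wielandt type bound (Proposition \ref{prop:spectral_radius}: $D\boldsymbol{v}\le b\boldsymbol{v}$ with $\boldsymbol{v}>0$ forces $\r(D)\le b$) to show that $\r(D)>1$ guarantees a small positive vector $\boldsymbol{a}$ with $f(\boldsymbol{a})>\boldsymbol{a}$; it then invokes the Tarski-type fixed point theorem for increasing concave maps (Theorem \ref{thm:concave_fixed_point}) and, decisively, runs the same recursion from the initial condition $\boldsymbol{a}$ to get an increasing orbit $\bar{\gamma}^h$ sandwiched below $\gamma^h$, whence $\varUpsilon_i\ge a_i>0$. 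You instead prove the zero-or-strictly-positive dichotomy from irreducibility and exclude $\varUpsilon\equiv 0$ by linearizing at the origin: the estimate $g(x)\ge(1-\epsilon)x$ on $[0,\epsilon]$ gives $\gamma^h\ge(1-\epsilon)D\gamma^{h-1}$ eventually, and pairing with the positive left Perron eigenvector $\mathbf{u}$ turns $\r(D)>1$ into geometric growth of $s_h=\mathbf{u}^T\gamma^h$, contradicting $s_h\to 0$; the needed fact $s_H>0$ holds since every $d_i\ge 1$, which you note. Your route is more self-contained --- it requires only the existence of a positive left Perron eigenvector, not the concave fixed-point machinery --- while the paper's comparison argument yields as a byproduct the uniqueness of the positive solution of the fixed point equation, which your argument does not address (and the lemma does not require). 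Both arguments are sound.
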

Recall that the \emph{spectral radius} $\r(A)$ of a square matrix $A$ is the maximal absolute value of all the
eigenvalues of $A$. To prove Lemma \ref{lem:gamma_i} we will need two additional results.
\begin{prop}
\label{prop:spectral_radius}
Let $A\in \R^{m\times m}$ be a non-negative matrix and let $\boldsymbol{v}\in\R^m$ be a positive vector. If there is a number
$b\geq 0$ such that $A\boldsymbol{v} \leq b\boldsymbol{v}$, then $\r(A) \leq b$. 
\end{prop}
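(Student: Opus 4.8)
The plan is to reduce the statement to the elementary fact that the spectral radius of any square matrix is dominated by every induced operator norm, after transforming $A$ by a diagonal similarity that turns the hypothesis $A\boldsymbol v\le b\boldsymbol v$ into a uniform bound on the row sums.

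First I would introduce the diagonal matrix $V=\operatorname{diag}(v_1,\dots,v_m)$, which is invertible precisely because $\boldsymbol v$ is a positive vector, and pass to the similar matrix $B=V^{-1}AV$. Similarity preserves the spectrum, hence $\r(B)=\r(A)$, so it suffices to bound $\r(B)$. Its entries are $B_{ij}=v_i^{-1}A_{ij}v_j$, which are non-negative since $A$ is non-negative and $\boldsymbol v>0$. Reading off the $i$-th row sum of $B$ gives $\sum_j B_{ij}=v_i^{-1}\sum_j A_{ij}v_j=v_i^{-1}(A\boldsymbol v)_i$. The hypothesis $A\boldsymbol v\le b\boldsymbol v$ means $(A\boldsymbol v)_i\le b\,v_i$ for every $i$, so each row sum of $B$ is at most $b$. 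Because $B$ is non-negative, its maximum absolute row sum equals $\max_i\sum_j B_{ij}\le b$, that is $\norm{B}_\infty\le b$, where $\norm{\cdot}_\infty$ denotes the operator norm induced by the sup-norm on $\R^m$.

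Finally I would invoke the standard inequality $\r(B)\le\norm{B}_\infty$: for any eigenpair $(\lambda,\boldsymbol x)$ with $\boldsymbol x\neq 0$ one has $\abs{\lambda}\,\norm{\boldsymbol x}_\infty=\norm{B\boldsymbol x}_\infty\le\norm{B}_\infty\norm{\boldsymbol x}_\infty$, whence $\abs{\lambda}\le\norm{B}_\infty$ (the eigenvector may be complex, but this is harmless for the $\infty$-norm). Taking the maximum over all eigenvalues yields $\r(B)\le\norm{B}_\infty\le b$, and therefore $\r(A)=\r(B)\le b$.

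The argument is essentially mechanical, and I do not expect a genuine obstacle; the only point requiring care is recognising that positivity of $\boldsymbol v$ is exactly what makes $V$ invertible and keeps $B$ non-negative, so that the row-sum computation and the norm inequality both apply. If one prefers to avoid the diagonal scaling, an equivalent route is to iterate the hypothesis to $A^n\boldsymbol v\le b^n\boldsymbol v$ (using that non-negativity of $A$ makes it order-preserving), deduce $\norm{A^n}_\infty\le (v_{\max}/v_{\min})\,b^n$ by comparing $(A^n\boldsymbol v)_i\ge v_{\min}\sum_j(A^n)_{ij}$ with $(A^n\boldsymbol v)_i\le b^n v_{\max}$, and conclude via Gelfand's formula $\r(A)=\lim_n\norm{A^n}_\infty^{1/n}\le b$.
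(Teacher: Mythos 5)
Your argument is correct and complete. There is nothing in the paper to compare it against step by step, because the authors do not prove this proposition themselves: they dispose of it with a citation to Ding and Zhou. Your route --- conjugate by $V=\operatorname{diag}(v_1,\dots,v_m)$, observe that $B=V^{-1}AV$ is non-negative with $i$-th row sum $v_i^{-1}(A\boldsymbol{v})_i\le b$, and invoke $\r(B)\le\norm{B}_\infty$ --- is the standard proof of this subinvariance inequality, and you correctly identify the two places where positivity of $\boldsymbol{v}$ is actually used (invertibility of $V$ and non-negativity of $B$) as well as the harmlessness of complex eigenvectors for the $\infty$-norm estimate. The alternative you sketch, iterating to $A^n\boldsymbol{v}\le b^n\boldsymbol{v}$ and applying Gelfand's formula, is equally sound. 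Either version would serve as a self-contained replacement for the external reference.
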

For the proof see \textsc{Ding and Zhou} \cite[Proposition 2.2]{ding2009nonnegative}.
The next result is a simple application of Tarski's Fixed point Theorem for increasing functions on lattices, see
\textsc{Kennan} \cite[Theorem 3.3]{kennan_2001}.
\begin{thm}
\label{thm:concave_fixed_point}
Suppose $f$ is an increasing and strictly concave function
from $\R^m$ to $\R^m$ such that $f(\boldsymbol{0})\geq \boldsymbol{0}$, $f(\boldsymbol{a})>\boldsymbol{a}$ for some positive vector $\boldsymbol{a}$, and
$f(\boldsymbol{b})<\boldsymbol{b}$ for some vector $\boldsymbol{b}>\boldsymbol{a}$. Then $f$ has a unique positive fixed point. 
\end{thm}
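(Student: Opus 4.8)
The plan is to separate the claim into an existence part, which follows from Tarski's fixed point theorem exactly as the citation to Kennan suggests, and a uniqueness part, where the strict concavity hypothesis is essential.

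For existence I would work on the order interval $[\boldsymbol{a},\boldsymbol{b}]=\{\boldsymbol{x}\in\R^m:\boldsymbol{a}\leq\boldsymbol{x}\leq\boldsymbol{b}\}$ under the coordinatewise order, which is a complete lattice. The point to verify is that $f$ maps this interval into itself: since $f$ is increasing, any $\boldsymbol{x}\in[\boldsymbol{a},\boldsymbol{b}]$ satisfies $f(\boldsymbol{a})\leq f(\boldsymbol{x})\leq f(\boldsymbol{b})$, and the hypotheses $f(\boldsymbol{a})>\boldsymbol{a}$ and $f(\boldsymbol{b})<\boldsymbol{b}$ then give $\boldsymbol{a}\leq f(\boldsymbol{x})\leq\boldsymbol{b}$. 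Applying Tarski's theorem to the increasing self-map $f$ of the complete lattice $[\boldsymbol{a},\boldsymbol{b}]$ produces a fixed point $\boldsymbol{x}^*$, which is positive since $\boldsymbol{x}^*\geq\boldsymbol{a}>\boldsymbol{0}$.

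The heart of the matter is uniqueness. First I would record a scaling inequality: for a positive vector $\boldsymbol{x}$ and $t\in(0,1)$, writing $t\boldsymbol{x}=t\boldsymbol{x}+(1-t)\boldsymbol{0}$ and using strict concavity (legitimate since $\boldsymbol{x}\neq\boldsymbol{0}$) together with $f(\boldsymbol{0})\geq\boldsymbol{0}$ gives $f(t\boldsymbol{x})>tf(\boldsymbol{x})+(1-t)f(\boldsymbol{0})\geq tf(\boldsymbol{x})$. Now suppose $\boldsymbol{x}^*$ and $\boldsymbol{y}^*$ are positive fixed points, with coordinates $x_i^*$ and $y_i^*$, and set $t^*=\min_i y_i^*/x_i^*$, so that $t^*\boldsymbol{x}^*\leq\boldsymbol{y}^*$ with equality in at least one coordinate. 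If $t^*<1$, then monotonicity and the scaling inequality yield $\boldsymbol{y}^*=f(\boldsymbol{y}^*)\geq f(t^*\boldsymbol{x}^*)>t^*f(\boldsymbol{x}^*)=t^*\boldsymbol{x}^*$, a strict inequality in every coordinate, contradicting the equality attained at the minimizing index. Hence $t^*\geq1$ and $\boldsymbol{y}^*\geq\boldsymbol{x}^*$; exchanging the roles of the two fixed points gives the reverse inequality, so $\boldsymbol{x}^*=\boldsymbol{y}^*$.

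I expect uniqueness to be the main obstacle, as existence is little more than checking the hypotheses of Tarski's theorem. The delicate choice is the comparison scalar $t^*=\min_i y_i^*/x_i^*$: taking the coordinatewise minimum is what forces equality in some coordinate while the scaling inequality delivers a strict inequality everywhere, so that strict concavity produces an actual contradiction rather than a merely non-strict one.
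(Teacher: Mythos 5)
Your proof is correct. The paper itself gives no proof of this statement --- it is quoted verbatim from Kennan's paper (the cited Theorem 3.3) and used as a black box --- so there is nothing in the paper to compare against line by line; but your argument is essentially the one in that reference: Tarski's theorem on the invariant order interval $[\boldsymbol{a},\boldsymbol{b}]$ for existence, and the standard scaling argument for concave operators (take $t^*=\min_i y_i^*/x_i^*$, use $f(t\boldsymbol{x})>tf(\boldsymbol{x})$ for $t\in(0,1)$ to contradict equality at the minimizing coordinate) for uniqueness. The only point worth making explicit is that ``strictly concave'' for the vector-valued $f$ must be read componentwise, so that the scaling inequality is strict in \emph{every} coordinate; this is exactly what turns the contradiction at the minimizing index from non-strict into strict, and it is the reading under which the theorem is applied in the paper.
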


\begin{proof}[Proof of Lemma \ref{lem:gamma_i}]
Let us fix $i\in\G$ for the rest of the proof.
The initial value is $\gamma^1_i=d_i$. Substituting in the definition of $\gamma_i^h$
the recurrence relations \eqref{eq:span_forests} for $F^h_{i,\up}$ and $F^h_{i,\down}$, we get
\begin{equation}
\label{eq:def_gamma}
\gamma_i^h=\sum_{j\in\G} \dfrac{d_{ij}F_{j,\up}^{h-1}}{F_{j,\down}^{h-1}+F_{j,\up}^{h-1}}
=\sum_{j\in\G} d_{ij} \dfrac{\gamma_j^{h-1}}{1+\gamma_j^{h-1}}, \text{ for } h\in \N.
\end{equation}
In order to prove that $\gamma_i^{h}$ is convergent in $h$, we show that $\gamma^h_i$ is monotone
and bounded. We first show by induction on $h$ that $\gamma_i^h$ is strictly decreasing.
We know that $\gamma^1_i=d_i$, and the induction basis $\gamma^2_i<\gamma^1_i$ follows from
\begin{equation*}
 \gamma^2_i=\sum_{j\in\G} d_{ij} \dfrac{\gamma_j^{1}}{1+\gamma_j^{1}}<\sum_{j\in\G} d_{ij}=d_i=\gamma^1_i. 
\end{equation*}
For the inductive step, we assume that $\gamma^h_i<\gamma^{h-1}_i$, 
and we prove that $\gamma^{h+1}_i<\gamma^h_i$. This follows from
\begin{equation*}
\gamma_i^{h+1}=\sum_{j\in\G} d_{ij}\frac{\gamma_j^{h}}{1+\gamma_j^{h}}<
\sum_{j\in\G}d_{ij}\frac{\gamma_j^{h-1}}{1+\gamma_j^{h-1}}=\gamma_i^h.
\end{equation*}
The sequence $\gamma^h_i$ is also bounded, i.e., $0\leq \gamma_i^h<d_i$, therefore it converges
to a limit. Denote by $\varUpsilon_i=\lim_{h\to\infty}\gamma_i^h$ and 
$\varUpsilon = (\varUpsilon_1,\ldots,\varUpsilon_m)\in \R^m_{\geq 0}$. Then
the limit vector $\varUpsilon$ is a solution of the fixed point equation
\begin{equation}
\label{eq:gamma_fixed_point}
 \varUpsilon_i=\sum_{j\in\G} d_{ij}\dfrac{\varUpsilon_j}{1+\varUpsilon_j}.
\end{equation}
In order to prove the positivity of the limit vector $\varUpsilon$ we apply Theorem \ref{thm:concave_fixed_point} to the function
$f = (f_1,\ldots,f_m):\R^m \to \R^m$ with
\begin{equation*}
f_i (\boldsymbol{x})= \sum_{j\in\G} d_{ij}\frac{x_j}{1+x_j} \text{ and } \boldsymbol{x}=(x_1,\ldots,x_m).
\end{equation*}
The function $f$ is obviously strictly concave and increasing. From the first part of the proof we have
$f(\boldsymbol{b}) < \boldsymbol{b}$ for $\boldsymbol{b}$ being the vector of initial values $\boldsymbol{b}=(d_1,\ldots,d_m)$.

Let now $\r(D) > 1$ and assume that for all positive vectors $\boldsymbol{a} = (a_1,\ldots,a_m)$ we have 
$f(\boldsymbol{a}) \leq \boldsymbol{a}$, which can be written as
\begin{equation}
\label{eq:Da_leq_a}
D\begin{pmatrix} 
  \frac{a_1}{1+a_1} \\
  \vdots \\
  \frac{a_m}{1+a_m}
 \end{pmatrix} \leq \boldsymbol{a}.
\end{equation}
Let $a^\star = \max \{a_i: i=1,\ldots,m\}$, then \eqref{eq:Da_leq_a} implies
$\frac{1}{1+a^\star} D \boldsymbol{a} \leq \boldsymbol{a}$, hence $D \boldsymbol{a} \leq (1+a^\star)\boldsymbol{a}$, and from Proposition \ref{prop:spectral_radius}
it follows that $\r(D) \leq 1 + a^\star$. Since $\boldsymbol{a}$ was arbitrary, $a^\star$ can be made arbitrary small, thus $\r(D) \leq 1$ which
is a contradiction to our assumption. Therefore there exists a positive vector $\boldsymbol{a}=(a_1,\ldots,a_m)$ such that $f(\boldsymbol{a}) > \boldsymbol{a}$. Theorem \ref{thm:concave_fixed_point} now ensures the existence of a unique positive fixed point of $f$, hence
the equation \eqref{eq:gamma_fixed_point} has a unique positive solution in addition to the trivial solution.

It remains to show that the limit vector cannot be zero.
Let $\bar{\gamma}^h_i$ be the sequence defined as $\gamma^h_i$ in \eqref{eq:def_gamma} with
initial values $\bar{\gamma}^1_i = a_i$. The vector $\boldsymbol{a}$ is such that $f(\boldsymbol{a}) > \boldsymbol{a}$.
It is easy to see that the sequence $\bar{\gamma}^h_i$ is increasing and $\gamma_i^{1} - \bar{\gamma}_i^{1}\geq 0$.
By induction on $h$, supposing $\gamma_i^{h} - \bar{\gamma}_i^{h}\geq 0$, we get
\begin{equation*}
\gamma_i^{h+1} - \bar{\gamma}_i^{h+1} =
\sum_{j\in\G} d_{ij}\left(\frac{\gamma_j^h}{1+\gamma_j^h} - \frac{\bar{\gamma}_j^h}{1+\bar{\gamma}_j^h}\right) 
= \sum_{j\in\G} d_{ij} \frac{\gamma_j^h - \bar{\gamma}_j^h}{(1+\gamma_j^h)(1+\bar{\gamma}_j^h)} \geq 0,
\end{equation*}
therefore $\gamma_i^{h} - \bar{\gamma}_i^{h}$ is non-negative. Hence $\gamma_i^h$ is bounded from below by $\bar{\gamma}_i^h$,
and the limit $\varUpsilon_i$ is positive.
\end{proof}

\begin{thm}
\label{thm:asymptotics_rrorder}
Let $\G$ be a finite, directed and strongly connected graph with vertex set $\{1,2,\ldots,m\}$
and adjacency matrix $D$. Let $\T_i$ be the directed cover of $\G$ with root $r$ of type $i$ and
$\wiredT_i$ be the wired tree defined as above. If $\r(D)>1$, then the order of the rotor-router group 
$\rr(\wiredT_i)$ grows doubly exponential:
\begin{equation}
 \big|\rr(\wiredT_i)\big|\asymp \exp\big\{\r(D)^h\big\},\quad \text{ for all } i\in\G,
\end{equation}
where $\r(D)$ is the spectral radius (Perron-Frobenius eigenvalue) of $D$. 
\end{thm}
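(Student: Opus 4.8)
The plan is to pass to logarithms and thereby reduce the statement to a linear inhomogeneous recursion governed by the adjacency matrix $D$, which can then be sandwiched between two multiples of $\r(D)^h$ by comparison with the Perron--Frobenius eigenvector. Write $L^h_i = \log F^h_{i,\down}$, and recall from Theorem \ref{thm:recurent_rotor_group} that $\big|\rr(\wiredT_i)\big| = F^h_{i,\down}+F^h_{i,\up} = F^h_{i,\down}\,(1+\gamma_i^h)$, where $\gamma_i^h = F^h_{i,\up}/F^h_{i,\down}$. By Lemma \ref{lem:gamma_i} the sequence $\gamma_i^h$ decreases to the \emph{positive} limit $\varUpsilon_i$, so $\log(1+\gamma_i^h)$ stays in the fixed interval $[\log(1+\varUpsilon_i),\log(1+d_i)]$; hence $\log\big|\rr(\wiredT_i)\big| = L^h_i + O(1)$, and it suffices to prove $L^h_i \asymp \r(D)^h$.

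First I would derive the recursion for the vector $\boldsymbol{L}^h = (L^h_1,\dots,L^h_m)$. Taking logarithms in the first line of \eqref{eq:span_forests} and factoring out $F^{h-1}_{j,\down}$ inside each factor gives $L^h_i = \sum_{j\in\G} d_{ij} L^{h-1}_j + c_i^{h-1}$, that is $\boldsymbol{L}^h = D\boldsymbol{L}^{h-1}+\boldsymbol{c}^{h-1}$, where $c_i^{h-1} = \sum_{j\in\G} d_{ij}\log\big(1+\gamma_j^{h-1}\big)$. The monotonicity and positivity from Lemma \ref{lem:gamma_i} again furnish two fixed positive vectors with $\boldsymbol{c}_{\min}\le \boldsymbol{c}^{h-1}\le \boldsymbol{c}_{\max}$ for all $h$, namely $(\boldsymbol{c}_{\min})_i = \sum_j d_{ij}\log(1+\varUpsilon_j)>0$ (strong connectedness ensures every row of $D$ is nonzero, so these entries are strictly positive) and $(\boldsymbol{c}_{\max})_i=\sum_j d_{ij}\log(1+d_j)$. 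Since $\boldsymbol{L}^1=\boldsymbol{0}$, unrolling the recursion yields the closed form $\boldsymbol{L}^h = \sum_{k=1}^{h-1}D^{h-1-k}\boldsymbol{c}^{k}$.

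The two-sided bound then follows by comparison with the Perron--Frobenius eigenvector. Let $\boldsymbol{w}>\boldsymbol{0}$ be the right eigenvector of $D$ for $\lambda=\r(D)$, which exists and is strictly positive because $D$ is non-negative and irreducible. Choose $\varepsilon,M>0$ with $\varepsilon\boldsymbol{w}\le \boldsymbol{c}_{\min}$ and $\boldsymbol{c}_{\max}\le M\boldsymbol{w}$, and use $D^k\boldsymbol{w}=\lambda^k\boldsymbol{w}$ together with $D\ge 0$ (so that products preserve componentwise inequalities). For the lower bound I iterate $\boldsymbol{L}^h\ge D\boldsymbol{L}^{h-1}+\boldsymbol{c}_{\min}$ down to $\boldsymbol{L}^1=\boldsymbol{0}$, obtaining $\boldsymbol{L}^h\ge\sum_{k=0}^{h-2}D^k\boldsymbol{c}_{\min}\ge\varepsilon\frac{\lambda^{h-1}-1}{\lambda-1}\,\boldsymbol{w}$; for the upper bound I use the closed form to get $\boldsymbol{L}^h\le M\sum_{k=1}^{h-1}\lambda^{h-1-k}\boldsymbol{w}=M\frac{\lambda^{h-1}-1}{\lambda-1}\,\boldsymbol{w}$. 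Because $\lambda=\r(D)>1$, both sides are of order $\lambda^h\boldsymbol{w}$, which gives $c_1\r(D)^h\le L^h_i\le c_2\r(D)^h$ for each $i$ and all large $h$, i.e. $L^h_i\asymp\r(D)^h$, and hence the claimed doubly exponential growth.

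The hypothesis $\r(D)>1$ enters in two essential places, and these are where the care is needed: it is what makes $\varUpsilon_i$ strictly positive in Lemma \ref{lem:gamma_i}, which is indispensable for the \emph{lower} bound on $\boldsymbol{c}^{h-1}$ (were $\varUpsilon_i=0$, the increment could decay and the partial sums would not grow like $\lambda^h$), and it makes the geometric series $\sum_k\lambda^k$ dominated by its last term so the estimates are genuinely of order $\lambda^h$. The one genuine subtlety I anticipate is the \emph{per-coordinate} lower bound on $L^h_i$: an aggregate estimate, for instance testing against a left eigenvector, only controls a positive combination $\sum_i u_i L^h_i$, and passing from this to a bound on each coordinate would collide with the possible periodicity of $D^k$. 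Comparing directly against the right eigenvector $\boldsymbol{w}$ avoids this altogether, since $D^k\boldsymbol{w}=\lambda^k\boldsymbol{w}$ holds exactly for every $k$, with no averaging required.
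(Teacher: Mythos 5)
Your proposal is correct and follows essentially the same route as the paper: take logarithms of the recursion \eqref{eq:span_forests}, obtain the linear system $\boldsymbol{L}^h = D\boldsymbol{L}^{h-1}+\boldsymbol{c}^{h-1}$ with the inhomogeneous term sandwiched between two fixed positive vectors via the positivity of $\varUpsilon_i$ from Lemma \ref{lem:gamma_i}, and extract the growth rate $\r(D)^h$ from the iterated powers of $D$. Your explicit comparison against the right Perron--Frobenius eigenvector is a nice touch that makes fully rigorous the per-coordinate estimate (robust even when $D$ is periodic), a step the paper handles more informally by asserting that $\sum_{k=1}^{h-1}D^k$ behaves like $D^h$.
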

\begin{proof}
Recall first that $ \big|\rr(\wiredT_i)\big|=F^h_{i,\down}+F^h_{i,\up}$.
In order to simplify the system of equations \eqref{eq:span_forests} let us make the following substitution:
for $i\in\G$ denote by $x_i^h=\log F^h_{i,\down}$ and $y_i^h=\log F^h_{i,\up}$
and apply the logarithm function to \eqref{eq:span_forests}. We get
\begin{equation}\label{eq:log_spanning_forests}
\begin{eqsystem}
x_i^h & = \sum_{j\in\G} d_{ij}x_j^{h-1}+\sum_{j\in\G} d_{ij}\log\big(1+\gamma_j^{h-1}\big) \\
y_i^h & =  x_i^h+\log\big(\gamma_i^h\big).
\end{eqsystem}
\end{equation}
Consider now the following vectors in $\R^m$:
\begin{equation*}
\boldsymbol{x}^h=  \begin{pmatrix}
x_1^h \\ \vdots \\x_m^h 
\end{pmatrix},
\quad \quad 
\boldsymbol{y}^h=\begin{pmatrix}
y_1^h \\ \vdots \\y_m^h 
\end{pmatrix},
\quad \quad 
\boldsymbol{\gamma}^h=\begin{pmatrix}
\gamma_1^h \\  \vdots \\ \gamma_m^h 
\end{pmatrix}.
\end{equation*}
Then \eqref{eq:log_spanning_forests} can be written in matrix form as
\begin{equation}\label{eq:vector_spanning_forests}
\begin{eqsystem}
\boldsymbol{x}^h &= D\Big(\boldsymbol{x}^{h-1}+\log\big(1+\boldsymbol\gamma^{h-1}\big)\Big)\\
\boldsymbol{y}^h &= \boldsymbol{x}^h+\log\big(\boldsymbol{\gamma}^h\big),
\end{eqsystem}
\end{equation}
where the function $\log$ in \eqref{eq:vector_spanning_forests} is applied
componentwise to the entries of the vectors $\boldsymbol\gamma^h$ and $(1+\boldsymbol\gamma^h)$ respectively.
The initial values are
$\boldsymbol{x}^1=(0,\ldots,0)^T$, $\boldsymbol{y}^1=(\log d_1, \ldots, \log d_m)^T$, and
$\boldsymbol\gamma^1=(d_1,\ldots, d_m)^T$. Then the solution of \eqref{eq:vector_spanning_forests}
is given by 
\begin{equation}
 \boldsymbol{x}^h=\sum_{k=1}^{h-1}D^k\log (1+\boldsymbol\gamma^{h-k}).
\end{equation}
From Lemma \ref{lem:gamma_i} the entries of the vector $\log\big(1+\boldsymbol\gamma^{h-k}\big)$ are bounded: for all
$i\in\{1,2,\ldots,m\}$ and $k=1,\ldots,h$
\begin{equation*}
0<\log(1+\varUpsilon_i)\leq\log (1+\gamma_i^{h-k})\leq\log(1+\gamma_i^1)=\log(1+d_i).
\end{equation*}
All $\varUpsilon_i$ and $d_i$ are positive. Write $c_i=\log(1+\varUpsilon_i)$
and $C_i=\log(1+d_i)$. Then
\begin{equation}\label{eq:Xh}
\left(\sum_{k=1}^{h-1}D^k\right)\boldsymbol{c} \leq \boldsymbol{x}^h \leq  \left(\sum_{k=1}^{h-1}D^k\right)\boldsymbol{C},
\end{equation}
where $\boldsymbol{c}=(c_1,\ldots,c_m)^T$ and 
$\boldsymbol{C}=(C_1,\ldots,C_m)^T$. The sum $\left(\sum_{k=1}^{h-1}D^k\right)$ behaves like $D^h$ for big values of $h$,
and the exponential growth rate of the matrix power $D^h$ as $h\to\infty$
is controlled by the eigenvalue of $D$ with the largest absolute value. 
Since $D$ is a non-negative and irreducible matrix, according to Perron-Frobenius
theorem for irreducible matrices, there exists a positive real number $\r(D)$
(the spectral radius of $D$), called the {\em Perron-Frobenius eigenvalue}
which is the eigenvalue of $D$ with the largest absolute value.
%The eigenvalue
%$\lambda$ is also simple, and it satisfies
%\begin{equation*}
%\min_i d_i\leq \lambda \leq \max_i d_i,
%\end{equation*}
%where $d_i=\sum_{j\in\G}d_{ij}$. Because of the irreducibility, $\min_i d_i\geq 1$, therefore
%the Perron-Frobenius eigenvalue of $D$ is greater that $1$. 
By \eqref{eq:Xh}, we can conclude that the behaviour of $\boldsymbol{x}^h$, for $h\to\infty$ is 
given by the greatest eigenvalue $\r(D)>1$ of $D$, i.e., for all $i$
\begin{equation*}
x_i^h\asymp \r(D)^h. 
\end{equation*}
Since $\boldsymbol{y}^h$ differs from $\boldsymbol{x}^h$ only by a bounded and decreasing quantity, see
\eqref{eq:vector_spanning_forests}, there exists 
$c>0$ such that $y_i^h\asymp \r(D)^h+c$.
Using $x_i^h=\log F^h_{i,\down}$ and $y_i^h=\log F^h_{i,\up}$, we get 
\begin{equation*}
F^h_{i,\down}\asymp \exp\{\r(D)^h\} \quad \text{ and }\quad F^h_{i,\up}\asymp \exp\{\r(D)^h\},
\end{equation*}
which implies
\begin{equation*}
 \big|\rr(\wiredT_i)\big|\asymp \exp\{\r(D)^h\},
\end{equation*}
and this proves the statement.
\end{proof}
\begin{exam}[The Fibonacci tree]
For the Fibonacci tree, the system of equations \eqref{eq:span_forests} can be written as 
\begin{equation*}
\begin{eqsystem}
F^h_{1,\down} &= F^{h-1}_{2,\down}+F^{h-1}_{2,\up}\\
F^h_{1,\up}   &= F^{h-1}_{2,\up}\\
\end{eqsystem}
\quad \quad
\begin{eqsystem}
F^h_{2,\down} &= \big(F^{h-1}_{1,\down}+F^{h-1}_{1,\up}\big)\big(F^{h-1}_{2,\down}+F^{h-1}_{2,\up}\big)\\
F^h_{2,\up}   &= F^{h-1}_{1,\up}\big(F^{h-1}_{2,\down}+F^{h-1}_{2,\up}\big)+F^{h-1}_{2,\down}\big(F^{h-1}_{1,\down}+F^{h-1}_{1,\up}\big)\\
\end{eqsystem}
\end{equation*}
The two sequences $\gamma^h_1$ and $\gamma^h_2$ are given recursively by
\begin{equation*}
\gamma^h_1=\dfrac{\gamma_2^{h-1}}{1+\gamma_2^{h-1}} \quad \text{ and } 
\quad \gamma_2^h= \dfrac{\gamma_1^{h-1}}{1+\gamma_1^{h-1}} +\dfrac{\gamma_2^{h-1}}{1+\gamma_2^{h-1}}.
\end{equation*}
The initial values are $\gamma^1_1=1$, $\gamma^1_2=2$ and the limit values are
\begin{equation*}
\varUpsilon_1=\lim_{h\to\infty}\gamma^h_1=\sqrt{2}-1 \quad \text{ and } 
\quad \varUpsilon_2=\lim_{h\to\infty}\gamma^h_2=\frac{\sqrt{2}}{2}.
\end{equation*}
The Perron-Frobenius eigenvalue of $D$ is $\frac{1+\sqrt{5}}{2}$, which is also related with
the Fibonacci numbers $\mathsf{F}_n$ by $\lim_{n\to\infty}\frac{\mathsf{F}_{n+1}}{\mathsf{F}_n}=\frac{1+\sqrt{5}}{2}$.
Finally, regarding the order of the rotor-router group $\rr(\wiredT_i)$ we have
\begin{equation*}
\big|\rr(\wiredT_i)\big|\asymp \exp\Bigg\{\Bigg(\frac{1+\sqrt{5}}{2}\Bigg)^h\Bigg\}. 
\end{equation*}
\end{exam}

\subsection{Order of the Root Element in the Rotor-Router Group}

\textsc{Levine} \cite{levine_sandpile_tree} computed the order of the root element
in the rotor-router group on homogeneous trees. His approach holds only for 
homogeneous trees, and it fails in our case. We describe here the 
order of the root element in terms of the respective orders on the principal subbranches.
Our method is yet another useful application of the {\em explosion formula} introduced in
\cite[Theorem 11]{angel_holroyd_2011}.

Like before, $\T_i$ is the directed cover of $\G$ with root $r$ of type $\tau(r)=i$. 
The {\em principal branches} of $\T_i$ are the subtrees $\T_{\chi_i(k)}$
rooted at the children $r^{(k)}$ of the root $r$ with type $\chi_i(k)=\tau(r^{(k)})$ and 
$k=1,2,\ldots, d_i$. Hence $\T_i$ has as principal branches $d_i=\sum_{j\in\G} d_{ij}$ subtrees.
Recall the definition of the truncated wired tree $\wiredT_i$, with the same root $r$ 
and sink $S=s_{\down}\cup s^{\up}$. Suppose now that we have one particle at the root $r$ of $\wiredT_i$, and we let it
perform a rotor-router walk until it hits the sink $S$, where it stops. Denote
by $\hat{r}_h$ the element of the rotor-router group $\rr(\wiredT_i)$ corresponding
to this process. Similarly, for all $k=1,\ldots,d_i$, denote by $\hat{r}^{(k)}_{h-1}$ the
element of the rotor-router group $\rr\big(\widetilde{\mathcal{T}}^{h-1}_{\chi_i(k)}\big)$
corresponding to one particle performing rotor-router walk on the principal subbranch
$\widetilde{\mathcal{T}}^{h-1}_{\chi_i(k)}$ starting at $r^{(k)}$.
Write $\langle\hat{r}_h \rangle$ for the cyclic subgroup of $\rr(\wiredT_i)$
generated by $\hat{r}_h$, and $\big\langle \big(\hat{r}^{(1)}_{h-1} ,\ldots ,\hat{r}^{(d_i)}_{h-1}\big) \big\rangle$
for the cyclic subgroup of $\bigoplus_{k=1}^{d_i}\rr\big(\widetilde{\mathcal{T}}^{h-1}_{\chi_i(k)}\big)$ 
generated by the element $\big(\hat{r}^{(1)}_{h-1} ,\ldots ,\hat{r}^{(d_i)}_{h-1}\big)$.
By \cite[Theorem 3.3]{levine_sandpile_tree} and from the isomorphism between the sandpile
and the rotor-router group of a tree, we have
\begin{equation}
 \rr(\wiredT_i)/\langle\hat{r}_h \rangle \simeq \bigoplus_{k=1}^{d_i} \widetilde{\mathcal{T}}^{h-1}_{\chi_i(k)}\Big/
\big\langle \big(\hat{r}^{(1)}_{h-1} ,\ldots ,\hat{r}^{(d_i)}_{h-1}\big) \big\rangle
\end{equation}
For simplicity of notation, we denote by $R^h_i$ the {\em order of the element $\hat{r}_h$}
in the rotor-router group $\rr(\wiredT_i)$, that is, the cardinality of the cyclic
group $\langle\hat{r}_h \rangle$, i.e. $ R_i^h=|\langle\hat{r}_h \rangle|$.
Recall the definition of the down and up sinks $s_{\down}$ and $s^{\up}$ respectively and let
\begin{center}
$S^h_{i,\down} =$ \begin{minipage}[t]{0.6\linewidth}
the number of particles stopped in $s_{\down}$ 
after  $R_i^h$  particles have been routed from $r$, 
\end{minipage}

$S^h_{i,\up} =$ \begin{minipage}[t]{0.6\linewidth}
the number of particles stopped in $s^{\up}$ 
after $R_i^h$ particles have been routed from $r$,
\end{minipage}
\end{center}
for $i\in\G$. Then
\begin{equation}\label{eq:sum_root_order}
 R_i^h=S^h_{i,\down}+S^h_{i,\up}.
\end{equation}
Write $R^{h-1}_{\chi_i(k)}$ for the number of particles started at the origin $r^{(k)}$
of the branch $\T^{h-1}_{\chi_i(k)}$.

\begin{thm}\label{thm:root_element_order}
The order $ R_i^h$ of the root element in the rotor-router group is given by
$R_i^h=S^h_{i,\down}+S^h_{i,\up}$ and $S^h_{i,\down},S^h_{i,\up}$ can be computed
recursively as follows:
\begin{equation}\label{eq:root_order_simplified}
\begin{eqsystem}
S^h_{i,\down} &= \lcm\Big(\Big\{R^{h-1}_{\chi_i(k)}: \text{ for }k=1,\ldots,d_i\Big\}\Big) \\
S^h_{i,\up}   &= S^h_{i,\down} \sum_{j\in\G}d_{ij}\dfrac{S^{h-1}_{j,\up}}{R^{h-1}_{j}},
\end{eqsystem}
\end{equation}
for all $i\in\G$. Here $\lcm$ represents the least common multiple.
The starting values are given by $S^1_{i,\down} = 1$ and $S^1_{i,\up}=d_i$,
for all $i\in\G$. 
\end{thm}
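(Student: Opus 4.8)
The plan is to run the rotor-router dynamics on $\wiredT_i$ explicitly and to read off the two recursions from particle conservation together with the explosion formula. I would first record the elementary accounting identity $R_i^h = S^h_{i,\down} + S^h_{i,\up}$: by definition $R_i^h$ particles are released at $r$ and each one is absorbed at exactly one of the two sinks $s_\down$ or $s^\up$, so the total count splits as the sum of the two boundary counts.

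The heart of the argument is to analyse what a return to the identity forces at the root. A rotor configuration returns to its starting state precisely when, at every non-sink vertex $x$, the number of departures from $x$ is a multiple of the number $\deg(x)$ of its neighbours, for then every rotor has completed whole cycles. Applying this at $r$, whose neighbours are $s_\down = r^{(0)}$ together with the $d_i$ children $r^{(1)}, \dots, r^{(d_i)}$, the number of departures from $r$ must equal $(d_i+1)q$ for some integer $q$, and a full cycle distributes the particles equally: exactly $q$ are sent to $s_\down$ (whence $S^h_{i,\down} = q$) and exactly $q$ enter each principal subbranch $\wiredT[h-1]_{\chi_i(k)}$ through the edge $(r, r^{(k)})$. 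Since the only communication between a subbranch and the rest of the tree runs along this single edge, by the abelian property each subbranch behaves exactly as the standalone wired tree $\wiredT[h-1]_{\chi_i(k)}$ fed with $q$ injections at its root $r^{(k)}$, the particles it returns upward to $r$ being re-routed and reabsorbed into the same count $q$.

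Next I would impose that the whole configuration, not just the rotor at $r$, returns. Since the subbranch at $r^{(k)}$ sees $q$ injections and its root element has order $R^{h-1}_{\chi_i(k)}$, its internal rotors return to the start iff $R^{h-1}_{\chi_i(k)} \mid q$, and this must hold simultaneously for every $k$. As $S^h_{i,\down} = q$ is a strictly increasing function of $q$ (and hence so is $R_i^h$), the minimal return occurs at the least common multiple, giving $S^h_{i,\down} = \lcm\{R^{h-1}_{\chi_i(k)} : k = 1, \dots, d_i\}$. For the up-count I would invoke the explosion formula: a subbranch receiving $q$ injections is cycled $c_k = q / R^{h-1}_{\chi_i(k)}$ times, and each full cycle delivers $S^{h-1}_{\chi_i(k),\up}$ of its particles to leaves, all of which lie in $s^\up$; summing over the children and grouping the $d_{ij}$ children of each type $j$ yields $S^h_{i,\up} = q \sum_{j\in\G} d_{ij} S^{h-1}_{j,\up}/R^{h-1}_j = S^h_{i,\down}\sum_{j\in\G} d_{ij} S^{h-1}_{j,\up}/R^{h-1}_j$. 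The base case $h=1$ is immediate: the height-one wired tree has $d_i$ leaves and one down-sink, so releasing $d_i+1$ particles completes one rotor cycle at $r$, sending one particle down and $d_i$ up, i.e. $S^1_{i,\down}=1$ and $S^1_{i,\up}=d_i$.

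The step I expect to be the main obstacle is the rigorous decoupling of the subbranches from the feedback at the root: particles bounce from a child back to $r$ and are re-injected into (possibly different) subbranches in an interleaved, hard-to-track order, so I must argue that only the total number $q$ of injections into each subbranch matters, not their timing. This is exactly where the abelian property and the explosion formula of Angel--Holroyd do the work, guaranteeing that the net effect of $q$ injections on a subbranch equals $c_k$ applications of its root-element cycle independently of the interleaving, which simultaneously legitimises the equal-distribution claim at $r$, the divisibility condition, and the linear scaling of the boundary counts. A secondary point to verify is that the minimal $q$ forcing a global return is attained exactly as a common multiple, so that minimality of $R_i^h$ transfers to minimality of $q$; I would cross-check this against the quotient isomorphism $\rr(\wiredT_i)/\langle\hat r_h\rangle \simeq \bigoplus_k \rr(\wiredT[h-1]_{\chi_i(k)})/\langle(\hat r^{(1)}_{h-1},\dots,\hat r^{(d_i)}_{h-1})\rangle$ stated above, in which the order of the distinguished tuple on the right is precisely $\lcm\{R^{h-1}_{\chi_i(k)}\}$.
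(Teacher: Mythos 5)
Your proposal is correct and lands on the same recursions, but it reorganises the paper's argument rather than reproducing it. The paper works purely with escape sequences: $e(\T^h_i,\underline{0})$ is periodic with period $R^h_i$, the explosion formula expresses the first period as $\mathfrak{X}$ applied to the sum of the subbranch periods, each repeated $\lcm(\{R^{h-1}_{\chi_i(k)}\})/R^{h-1}_{\chi_i(k)}$ times so that the summands have a common length, and the two formulas drop out by counting the zeros and the ones of $\mathfrak{X}(\mathbf{v}^{h-1}_i)$. You instead obtain the $\lcm$ from a rotor-cycle count at the root ($(d_i+1)q$ departures, $q$ injections per subbranch, global return forcing $R^{h-1}_{\chi_i(k)}\mid q$) and the up-count from particle conservation and linear scaling over full periods; these are two views of the same fact, namely that the period of a sum of periodic escape sequences is the $\lcm$ of the individual periods. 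What your version buys is a more transparent physical picture and an explicit minimality argument (your $N(q)$ is strictly increasing in $q$, so minimality of $R^h_i$ transfers to minimality of $q$), a point the paper leaves implicit. What the paper's version buys is that the decoupling step you rightly flag as the main obstacle costs nothing: the explosion formula is already stated as an identity of escape sequences, so independence of the interleaving is built in, whereas in your formulation you would still have to invoke it (or re-prove the abelian property for the root/subbranch decomposition) in essentially that form to justify that only the total number of injections into each subbranch matters.
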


The proof of this result uses the \emph{explosion formula} introduced in \cite[Theorem 11]{angel_holroyd_2011}.
Since we will need this formula in the proof, we first adapt it here to our case.

\subsubsection{Explosion Formula}

The explosion formula, introduced in \cite[Theorem 11]{angel_holroyd_2011}, gives a
recursive formula for computing the number $E_n(\T_i,\rho)$ of particles which escape to infinity when
we start $n$ rotor-router walks at the root $r$ of a tree $\T_i$. The initial configuration of 
rotors on $\T_i$ is $\rho$.  For the tree $\T_i$ and principal branches $\T_{\chi_i(k)}$, 
rooted at the children $r^{(k)}$ of $r$, write $\rho_{\chi_i(k)}$
for the restriction of the rotor configuration $\rho$ on $\T_{\chi_i(k)}$, with $k=1,\ldots,d_i$.

Let us first introduce some notations, following mainly the notations from \cite[Theorem 11]{angel_holroyd_2011}. 
Let $e_n=e_n(\T_i,\rho)=\indicator{[\text{particle } n \text{ escapes to infinity}]}$, where $\indicator{}$
represents the indicator function, so that $E_n=\sum_{k=1}^ne_k$. Moreover, let
$e(\T_i,\rho)$ be the {\em escape sequence} $(e_1,e_2,\ldots)$.
Let $\N_+=\{1,2,\ldots\}$ and for sequences in $\N^{\N_+}$ we denote addition by
$(a_1,a_2,\ldots)+(b_1,b_2,\ldots):=(a_1+b_1,a_2+b_2,\ldots)$. Define the 
{\em shift operator } $\theta$ by
\begin{equation*}
\theta(a_1,a_2,\ldots):=(0,a_1,a_2,\ldots) 
\end{equation*}
and the {\em explosion operator} $\mathfrak{X}$ by 
\begin{equation*}
\mathfrak{X}(a_1,a_2,\ldots):=(1^{a_1},0,1^{a_2},0,\ldots)
\end{equation*}
where $1^k$ denotes a string of $k$ $1$s, or the empty string if $k=0$. We define a {\em majorization}
order $\preceq$ on sequences $(a_1,a_2,\ldots)\preceq(b_1,b_2,\ldots)$
if and only $\sum_1^n a_j\leq \sum_1^n b_j$ for all $n$. 
For $\mathbf{a}=(a_1,a_2,\ldots)$ denote by $\mathbf{a}_k = a_k$ its $k$-th element. For a finite sequence $\mathbf{a}$ the
length is denoted by $\abs{\mathbf{a}}$ and $\mathbf{a}^n$ is its $n$ times repetition.
We now adapt \cite[Theorem 11]{angel_holroyd_2011} for directed covers of finite graphs.
\begin{thm}[Explosion formula]
Let $\G$ be a finite graph with $m$ vertices and $\T_i$ its directed cover with root 
$r$ of type $i$. Fix a rotor configuration $\rho$ on $\T_i$. Then
\begin{equation*}
e(\T_i,\rho)=\mathfrak{X}\Bigg(\sum_{k=1}^{\rho(r)}\theta e\big(\T_{\chi_i(k)},\rho_{\chi_i(k)}\big)+
\sum_{k=\rho(r)+1}^{d_i}e\big(\T_{\chi_i(k)},\rho_{\chi_i(k)}\big)\Bigg).
\end{equation*}
\end{thm}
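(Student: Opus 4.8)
The plan is to track the single rotor at the root $r$ and to show that its deterministic cyclic routing is exactly what the operators $\theta$ and $\mathfrak{X}$ encode; the argument follows \cite[Theorem 11]{angel_holroyd_2011}, the only new feature being that the principal subbranches $\T_{\chi_i(k)}$ now carry possibly different cone types $\chi_i(k)$, which changes the subtree escape sequences but not the routing at $r$. I would feed particles into $r$ one at a time and record, for the $n$-th particle, whether it escapes ($e_n=1$) or eventually falls through the ancestor $r^{(0)}$ and is absorbed ($e_n=0$).

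First I would describe the departure pattern at $r$. Since the rotor at $r$ starts at $\rho(r)$ and is incremented before each move, successive departures from $r$ visit the neighbours in the fixed cyclic order $r^{(\rho(r)+1)},\dots,r^{(d_i)},r^{(0)},r^{(1)},\dots,r^{(\rho(r))}$ and then repeat, the single visit to $r^{(0)}$ per period being the only downward move. A departure to a child $r^{(k)}$ feeds the next particle into $\T_{\chi_i(k)}$, where by the \emph{abelian property} of rotor-router walks the fate of the $j$-th particle entering $r^{(k)}$ depends only on the entry order and equals $e\big(\T_{\chi_i(k)},\rho_{\chi_i(k)}\big)_j$, independently of how these entries interleave with visits to the other subbranches. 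If that particle escapes, the current injected particle escapes; if it returns to $r$, the same injected particle simply takes the next departure, so that returns are \emph{transparent} and the global departure sequence is precisely the cyclic pattern above.

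Next I would group the departures into blocks delimited by consecutive visits to $r^{(0)}$, writing block $1$ for all departures before the first sink visit and block $t$ for those between the $(t-1)$-th and $t$-th sink visits. Block $1$ consists of the first entries into the children with $k>\rho(r)$, contributing $\sum_{k>\rho(r)} e\big(\T_{\chi_i(k)},\rho_{\chi_i(k)}\big)_1$ escapes; for $t\ge 2$, block $t$ consists of the $(t-1)$-th entries into the children with $k\le\rho(r)$ followed by the $t$-th entries into the children with $k>\rho(r)$. Hence the number of escapes in block $t$ is exactly the $t$-th coordinate of $\mathbf{b}:=\sum_{k=1}^{\rho(r)}\theta\, e\big(\T_{\chi_i(k)},\rho_{\chi_i(k)}\big)+\sum_{k=\rho(r)+1}^{d_i} e\big(\T_{\chi_i(k)},\rho_{\chi_i(k)}\big)$, the shift $\theta$ recording that the children with $k\le\rho(r)$ are visited only after the first sink hit and are thus delayed by one block. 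Deleting the transparent returns and reading the outcomes block by block then yields $\mathbf{b}_t$ escapes followed by one sink move for each $t$, which is by definition $\mathfrak{X}(\mathbf{b})$, establishing $e(\T_i,\rho)=\mathfrak{X}(\mathbf{b})$.

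The hard part will be twofold. The bookkeeping claim—that returning particles re-enter the cyclic routing so cleanly that the $j$-th departure to $r^{(k)}$ always carries the $j$-th particle into $\T_{\chi_i(k)}$—must be justified with care, and the abelian property is the essential tool that makes the interleaving between subbranches irrelevant. The more delicate point is the well-definedness of the escape sequences on the infinite transient tree, since an escaping particle never settles in finitely many steps. I would resolve this by first proving the identity on the finite truncations $\T_i^h$, where \emph{escape} means reaching the upper sink $s^\up$ and every particle reaches $S=s_\down\cup s^\up$ in finitely many steps (as in Section \ref{subsec:rr_walks}), so that the block argument above is entirely finite and rigorous, and then passing to the limit $h\to\infty$ using the consistency of the truncated escape sequences supplied by the transience framework of \cite{huss_sava_transience_dircovers,angel_holroyd_2011}.
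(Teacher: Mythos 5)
Your proposal is correct and takes essentially the same route as the argument the paper relies on: the paper offers no proof of its own, deferring entirely to Angel and Holroyd's Theorem 11, and your block decomposition of the departures at $r$ between successive visits to $r^{(0)}$ --- with returns from subbranches treated as transparent, the disjointness of the cones guaranteeing that the $j$-th entry into $\T_{\chi_i(k)}$ has fate $e\big(\T_{\chi_i(k)},\rho_{\chi_i(k)}\big)_j$, and the shift $\theta$ recording that the children $k\le\rho(r)$ are first entered only after the first sink visit --- is precisely that argument carried over to the multi-type setting, where it goes through unchanged. Your plan to make the infinite-tree statement rigorous by first proving it on the truncations $\T_i^h$ with upper sink $s^\up$ is also consistent with the finite version stated in the paper's subsequent remark, which is the form actually used later.
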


\begin{rem}
In case of a finite tree $\T^h_i$, we define the escape sequence $e$ such that $e_n(\T^h_i,\rho)=1$ if
the $n$-th particle reaches the upper sink $s^\up$ before reaching $s_\down$, and $0$ otherwise. Then
the explosion formula can be also written in the form
\begin{equation*}
e(\T^h_i,\rho)=\mathfrak{X}\Bigg(\sum_{k=1}^{\rho(r)}\theta e\big(\T^{h-1}_{\chi_i(k)},\rho_{\chi_i(k)}\big)+
\sum_{k=\rho(r)+1}^{d_i}e\big(\T^{h-1}_{\chi_i(k)},\rho_{\chi_i(k)}\big)\Bigg),
\end{equation*}
with initial values $e(\T^0_i,\rho) = (1,1,\ldots)$ for all $i\in\G$.
\end{rem}

\begin{proof}[Proof of Theorem \ref{thm:root_element_order}]
Fix an $i\in\G$.
% We first prove that
% \begin{equation}\label{eq:root_order}
% \begin{eqsystem}
% S^h_{i,\down} &= \lcm(R^{h-1}_{1},\ldots,R^{h-1}_{d_i}) \\
% S^h_{i,\up}   &= S^h_{i,\down} \sum_{k=1}^{d_i}\dfrac{S^{h-1}_{k,\up}}{R^{h-1}_{k}},
% \end{eqsystem}
% \end{equation}
% and then we show that this is equivalent with \eqref{eq:root_order_simplified}.
It is enough to consider only the zero rotor configuration $\underline{0}$, that is, at all
vertices the rotors point to the ancestors. Recall that $R^h_i$ is the order of the root
element in the rotor-router group of $\T^h_i$. Hence, after $R^h_i$ particles have performed a
rotor-router walk (stopped at the sink $S$) we are back to the configuration $\underline{0}$.
In other words, the escape sequence $e(\T^h_i, \underline{0})$ is periodic with  period 
$R^h_i$. Write $\mathbf{e}(\T^h_i)$ for the finite sequence consisting of the first full period
of  $e(\T^h_i, \underline{0})$. In order to adapt the explosion formula to the finite 
sequence $\mathbf{e}(\T^h_i)$, all the escape sequences involved need to be extended 
such that they have the same length. Let
\begin{equation*}
f^h_i(k) := \frac{\lcm\left(R^h_{\chi_i(1)},\ldots,R^h_{\chi_i(d_i)}\right)}{R^h_{\chi_i(k)}} .
%= \frac{S^h_{i,\down}}{R^h_{\chi_i(k)}},
\end{equation*}
We can now write the explosion formula for the first $R^h_i$ particles as
\begin{equation}
\label{eq:explosion_finite_sequence}
\mathbf{e}(\T^h_i) =
\mathfrak{X}\left(\mathbf{v}^{h-1}_i\right)
\quad
\text{ with }
\quad
\mathbf{v}^{h-1}_i = \sum_{k=1}^{d_i}\mathbf{e}\big(\T^{h-1}_{\chi_i(k)}\big)^{f^{h-1}_i(k)}.
%\mathfrak{X}\left(\sum_{k=1}^{d_i}\mathbf{e}\big(\T^{h-1}_{\chi_i(k)}\big)^{f^{h-1}_i(k)}\right)
\end{equation}
Hence $\mathbf{e}(\T^h_i)$ is a sequence of length $R^h_i = S^h_{i,\down} + S^h_{i,\up}$ consisting
of ''zeros`` and ''ones``, with $S^h_{i,\down}$ being the number of $0$'s and $S^h_{i,\up}$ being
the number of $1$'s. Since the number of $0$'s in a string $\mathfrak{X}(\mathbf{a})$ is equal to
$\abs{\mathbf{a}}$, we get 
\begin{equation*}
S^h_{i,\down} = \abs{\mathbf{v}^{h-1}_i} = \lcm\left(R^h_{\chi_i(1)},\ldots,R^h_{\chi_i(d_i)}\right).
\end{equation*}
On the other hand, because $S^h_{i,\up}$ equals the number of $1$'s in the escape sequence $\mathbf{e}(\T^h_i)$,
the explosion formula \eqref{eq:explosion_finite_sequence} gives
\begin{equation}\label{eq:Shi_expansion}
S^h_{i,\up} = \sum_{l=1}^{R^h_i} \mathbf{e}(\T^h_i)_l
= \sum_{l=1}^{R^h_i} \mathfrak{X}\left(\mathbf{v}^{h-1}_i\right)_l 
= \sum_{l=1}^{S^h_{i,\down}}   \left(\sum_{k=1}^{d_i}\mathbf{e}\big(\T^{h-1}_{\chi_i(k)}\big)^{f^{h-1}_i(k)}\right)_l.
\end{equation}
In the last equality we used the fact that for any finite sequence $\mathbf{a}$,
$\sum_{l=1}^{\abs{\mathbf{a}}} \mathbf{a}_l = \sum_{l=1}^{\abs{\mathfrak{X}(\mathbf{a})}} \mathfrak{X}(\mathbf{a})_l$
holds. By exchanging the order of summation in \eqref{eq:Shi_expansion}, we obtain
\begin{align*}
S^h_{i,\up} &=
\sum_{k=1}^{d_i}
   \sum_{l=1}^{S^h_{i,\down}} \left(\mathbf{e}\big(\T^{h-1}_{\chi_i(k)}\big)^{f^{h-1}_i(k)}\right)_l
 = \sum_{k=1}^{d_i} f^{h-1}_i(k) \sum_{l=1}^{R^{h-1}_{\chi_i(k)}} \mathbf{e}\Big(\T^{h-1}_{\chi_i(k)}\Big)_l \\
&= \sum_{k=1}^{d_i} f^{h-1}_i(k) S^{h-1}_{\chi_i(k),\up} 
 = S^h_{i,\down}\sum_{k=1}^{d_i} \frac{S^{h-1}_{\chi_i(k),\up}}{R^{h-1}_{\chi_i(k)}} 
= S^h_{i,\down}\sum_{j\in\G} d_{ij} \frac{S^{h-1}_{j,\up}}{R^{h-1}_j},
\end{align*}
which proves the theorem.
\end{proof}
We give now an alternative way of writing the system \eqref{eq:root_order_simplified}.
Using \eqref{eq:sum_root_order}, let us add the two equations in \eqref{eq:root_order_simplified}, 
and then divide the result through $R^h_i$. We get
\begin{equation}\label{eq:alternative_root_order}
\begin{eqsystem}
S^h_{i,\down} &= \lcm\Big(\Big\{R^{h-1}_{\chi_i(k)}: \text{ for }k=1,\ldots,d_i\Big\}\Big) \\
1             &= \dfrac{S^h_{i,\down}}{R^h_i}\Bigg(d_i+1-\sum_{j\in\G}d_{ij}\dfrac{S^{h-1}_{j,\down}}{R^{h-1}_{j}}\Bigg)
\end{eqsystem}
\end{equation}
Because of the $\lcm$ involved in \eqref{eq:root_order_simplified}, it is hard to derive 
asymptotics for $R_i^h$.

\paragraph*{Connection with the Random Walk.} 
Let $(X_t)$ be a simple random walk on $\T_i$
which starts at the root $r$ of $\T_i$ and define the {\em stopping times} $T_i$ (which depend on the root 
$r$ with type $\tau(r)=i$) as
\begin{equation*}
T_i=\inf\{t\geq 0: X_t\in s_{\down}\cup s^{\up}\}
\end{equation*}
and {\em hitting probabilities} $H_{i,\down}$ and $H_{i,\up}$ as
\begin{equation}\label{eq:hit_times}
H_{i,\down}=\P_{r}[X_{T_i}=s_{\down}] \quad \text{ and } \quad H_{i,\up}=\P_{r}[X_{T_i}\in s^{\up}]
\end{equation}
Write $H^h_{i,\down}$ and $H^h_{i,\up}$ for the corresponding probabilities on the truncated tree
$\T^h_i$. It is easy to see that $H_{i,\down}$ (or $H_{i,\up}$) can be expressed as the quotient 
between the number of particles $S^h_{i,\down}$ (or $S^h_{i,\up}$) that are routed in the
sink $s_{\down}$ (or $s^{\up}$) and the total number of particles $R_i^h$ started at the root,
that is
\begin{equation*}
H_{i,\down}=\dfrac{S^h_{i,\down}}{R^h_i} \quad \text{ and } \quad H_{i,\up}=\dfrac{S^h_{i,\up}}{R^h_i}.
\end{equation*}
One can also obtain that $H_i$ is the solution of the same equation \eqref{eq:alternative_root_order},
by factorizing the random walk $(X_t)$ with respect to the first step
\begin{equation}\label{eq:hit_prob}
 1=H^h_{i,\down}\Big(d_i+1-\sum_{k=1}^{d_{i}}H^{h-1}_{\chi_i(k),\down}\Big)=H^h_{i,\down}\Big(d_i+1-\sum_{j=1}^{m}d_{ij}H^{h-1}_{j,\down}\Big),
\end{equation}
with the initial value $H^1_{i,\down}=1/(d_i+1)$.

\begin{exam}[Fibonacci Tree] Consider the order of the root element in the
rotor-router group $\rr(\wiredT_i)$ of the wired Fibonacci tree $\wiredT_i$.
In this case the equation \eqref{eq:alternative_root_order} is given as:
\begin{equation*}
\begin{eqsystem}
S^h_{1,\down} & =R^{h-1}_2\\
R^h_1 & = S^h_{1,\down}\left(2-\dfrac{S^{h-1}_{2,\down}}{R^{h-1}_2}\right)
\end{eqsystem}
\quad \quad \quad \quad
\begin{eqsystem}
S^h_{2,\down} & = \lcm \big(R^{h-1}_1,R^{h-1}_2\big)\\
R^h_2 & = S^h_{2,\down}\left(3-\dfrac{S^{h-1}_{1,\down}}{R^{h-1}_1}-\dfrac{S^{h-1}_{2,\down}}{R^{h-1}_2}\right)
\end{eqsystem}
\end{equation*}
Computations suggest that on the Fibonacci tree, the sequences
$S^h_{i,\down}$ and $R_i^h$ are relatively prime. 
\end{exam}
\begin{exam}[Bi-regular tree]
For the $(\alpha,\beta)$-bi-regular tree the recurrence relation \eqref{eq:alternative_root_order}
for the order of the root element reduces to the simple form
\begin{align*}
R^h_1 & = R^{h-1}_2(\alpha + 1) - \alpha R^{h-2}_1 \\
R^h_2 & = R^{h-1}_1(\beta + 1) - \beta R^{h-2}_2,
\end{align*}
which has the explicit solution
\begin{equation*}
R^h_1 = \begin{cases}
\dfrac{(\alpha \beta)^k - 1}{\alpha\beta - 1} (\alpha + 1)\beta + 1, & \text{ for } h = 2k \\[1em]
\dfrac{(\alpha\beta)^{k+1} - 1}{\alpha\beta - 1}(\alpha + 1), & \text{ for } h = 2k + 1.
\end{cases}
\end{equation*}
For $R^h_2$ the solution is the same, with the roles of $\alpha$ and $\beta$ exchanged.
\end{exam}

%===================================================================================================
\bibliography{rrgroup_directed_covers}{}
\bibliographystyle{mypaperhep}

\vspace{1cm}
\begin{minipage}{0.45\textwidth}
Wilfried Huss\\
Vienna University of Technology\\
E-mail: \texttt{whuss@mail.tuwien.ac.at}\\
\texttt{http://www.math.tugraz.at/$\sim$huss}
\end{minipage}
\hfill
\begin{minipage}{0.45\textwidth}
Ecaterina Sava\\
Graz University of Technology\\
E-mail: \texttt{sava@tugraz.at}\\
\texttt{http://www.math.tugraz.at/$\sim$sava}
\end{minipage}

\end{document}